\def\be{\begin{equation}}
\def\ee{\end{equation}}
\def\bse{\begin{subequations}}
\def\ese{\end{subequations}}
\numberwithin{equation}{section}
\renewcommand\d{\partial}
\renewcommand\th{\theta}
\def\eps {\varepsilon}
\def\half{{\textstyle\frac12}}
\newcommand{\R}{\mathbb R}
\newcommand{\C}{\mathbb C}
\newcommand{\RM}{{\mathbb{R}}}
\newcommand{\CM}{{\mathbb{C}}}
\newcommand{\ZM}{{\mathbb{Z}}}
\def\half{{\textstyle\frac12}}
\def\thalf{{\textstyle\frac32}}
\newtheorem{theorem}{Theorem}[section]
\newtheorem{proposition}[theorem]{Proposition}
\newtheorem{lemma}[theorem]{Lemma}
\newtheorem{remark}[theorem]{Remark}
\theoremstyle{definition}
\newtheorem{definition}[theorem]{Definition}
\title{Modulational Stability of Wave Trains in the Camassa-Holm Equation}
\author{Mathew~A.~Johnson\thanks{Department of Mathematics, University of Kansas, 1460 Jayhawk Boulevard, 
Lawrence, KS 66045, USA; matjohn@ku.edu}\quad \&\quad
Jeffrey Oregero\thanks{Department of Mathematics, University of Kansas, 1460 Jayhawk Boulevard, Lawrence, KS 66045, USA; oregero@ku.edu}}
\date{\today}
\begin{document}

\maketitle

\begin{abstract}
In this paper, we study the nonlinear wave modulation of arbitrary amplitude periodic traveling wave solutions of the Camassa-Holm (CH) equation.  Slow modulations
of wave trains is often described through Whitham's theory of modulations, which at leading order models the slow evolution of the fundamental wave characteristics 
(such as the wave's  
frequency, mass and momentum) through a disperionless system of quasi-linear partial differential equations.  The modulational stability or instability of such a slowly modulated 
wave is considered to be determined by the hyperbolicity or ellipticity of this Whitham modulation system of equations.  
In work by Abenda \& Grava, the Whitham modulation system for the CH equation was derived through averaged Lagrangian methods and was further shown to always
be hyperbolic (although strict hyperbolicity may fail).
In this work, we provide an independent derivation of the Whitham modulation system for the CH equation through nonlinear WKB / multiple scales expansions.  
We further provide 
a rigorous connection between the Whitham modulation equations for the CH equation
and the spectral stability of the underlying periodic wave train to localized (i.e. integrable on the line) perturbations.
In particular, 
we prove that the strict hyperbolicity of the Whitham system implies spectral stability in a neighborhood of the origin in the spectral plane, i.e. spectral
modulational stability.  As an illustration of our theory, we examine the Whitham modulation system for wave trains with asymptotically small oscillations about their total mass.
\end{abstract}
	

\section{Introduction}

We consider the stability of periodic traveling wave solutions of the Camassa-Holm (CH) equation
\begin{equation}
\label{e:ch}
u_t-u_{txx}+3uu_x=2u_xu_{xx}+uu_{xxx},
\end{equation}
which was derived\footnote{The CH equation had actually been derived more than a decade earlier \cite{FF81} as a bi-Hamiltonian generalization
of the KdV equation. } in \cite{CH93,CHH94} and later justified as a model for the propagation of unidirectional surface water waves over a flat bottom in \cite{CL09,Johnson_RS02}.
The CH equation is strongly nonlinear and is known to exhibit a number of water wave phenomena, such as wave breaking \cite{CE98}, 
that are not modeled in more conventional shallow-water models such as the Korteweg-de Vries (KdV) and Benjamin-Bona-Mahony (BBM) equations.  The CH equation
has been the object of much study, and is known to be bi-Hamiltonian as well as completely integrable via the Inverse Scattering Transform \cite{Const_01}.
Additionally, the CH equation is well-known to admit large classes of both smooth and peaked traveling solitary and periodic waves, as well as mult-soliton type solutions.  The dynamics and stability of both smooth and peaked solutions has been the subject of many works: see, for example, \cite{CP21,CW02,GMNP22}.

In this work, we are interested in studying the stability and dynamics of slowly modulated periodic traveling wave solutions, sometimes referred to as wave trains, in the Camassa-Holm
equation.  Slow modulations of periodic surface water waves has a long and rich history, with possibly the first understanding of its importance in water wave theory coming from 
the work by Whitham in the 1960's.  In \cite{W67}, Whitham, applied his newly developed modulation theory to study the stability of small amplitude, periodic Stokes
waves in finte-depth water.  By deriving what  is now referred to as the modulation equations in this context, he showed that a Stokes wave
with frequency $k$ in a fluid with undisturbed depth $h_0$ is unstable if $kh_0>1.363\ldots$.  Around the same time, Benjamin and Feir \cite{BF67}, being motivated by 
laboratory experiments \cite{F67}, showed that a Stokes wave in deep water is unstable and additionaly discovered the so-called ``sideband" nature of the instability.
This immediately spurred a great amount of work, with corroborating findings being independently derived and provided nearly simultaneously by
Lighthill \cite{L65}, Ostrovsky \cite{O67}, Benny and Newell \cite{BN67}, and Zakharov \cite{Z68}.

While the various arguments cited above are indeed correct, they are difficult to justify in a mathematically rigorous, functional analytic, nonlinear PDE setting.
Indeed, it wasn't until the work of Bridges and Mielke \cite{BM95} that such a
rigorous understanding of the modulational, or sideband, instability of asymptotically small amplitude Stokes waves in finite-depth water was provided. 
In their work, they proved
that when the so-called Benjamin-Feir criteria $kh_0>1.363\ldots$ is satisfied then the linearization of the governing PDE
about the associated surface water wave, considered as an operator acting on localized (i.e. integrable
on the line) perturbations, admits curves of unstable spectrum in a sufficiently small neighbood of the origin in the spectral plane.  This provided a first result
connecting the rigorous spectral instability of a periodic wave train to the predictions from Whitham's theory of modulations.  More recently, there have been a number
of similar results pertaining to the modulational instability of asymptoically small amplitude Stokes waves.  See, for instance, the work of Nguyen and Strauss \cite{NS23}
`where they extend the work of Bridges and Mielke to the infinite-depth problem.  See also the works \cite{BMV23,HY23} as well as the very recent works 
\cite{CNS23,CNS24_2,CNS24_1,JRSY24}, all of which consider various aspects of the spectral and modulational instability of Stokes waves.

While the above works all have to do with modulations of Stokes waves, the theory of wave modulation for large amplitude periodic traveling waves (i.e. those outside
of the asymptotically small amplitude limit of the Stokes waves) is considerably less developed.  For these large amplitude waves, current analytical studies are confined
to the setting of various model equations. 
As in the case of the aforementioned analysis of Stokes waves, however, a mathematically rigorous justification 
of the predictions of Whitham's theory is quite difficult, even in the studies of simpler model equations.  In recent years, however, various authors have succeeded in providing
rigorous connections between Whitham's theory of modulations and the spectral stability of the underlying large amplitude wave train.  See, for instance,
\cite{BMR21,BrHJ16,JP20,JZ10}.  Most of these studies, however, deal with model equations that are long-wavelength and weakly nonlinear.
We also note that there are several works on the modulational instability of asymptotically small Stokes waves for model equations: see, for instance, 
 \cite{HLS06,Har08,HJ15,HJ15_2,HK08,J13}.  

\

Our goal is to extend the above studies to provide a mathematically rigorous connection between Whitham's theory of wave modulation and the spectral stability
of periodic wave trains for large amplitude waves in the strongly nonlinear Camassa-Holm equation \eqref{e:ch}.  
In particular, we first discuss the existence theory
for periodic traveling wave solutions of the CH equation, showing that such solutions form a 3-dimensional manifold of solutions parameterized by
the spatial frequency $k>0$ as well as the mass $M$ and momentum $P$  (both of which are c`onserved quantities) of the wave.  Using a multiple-scales / WKB approximation,
we then show that slow modulations of a given wave train $\phi_0(x)=\phi(x;k_0,M_0,P_0)$ will formally evolve via
\[
u(x,t;X,T) = \phi(x;k(X,T),M(X,T),P(X,T)),
\]
where $(X,T)=(\eps x,\eps t)$ are the slow space and time variables, with the modulation functions $k,M,P$  evolving slowly near $(k_0,M_0,P_0)$ according
to the a quasi-linear first order system of the form
\begin{equation}\label{e:W_intro}
\partial_T\left(k,M,P\right) = \mathcal{W}(\phi)\partial_X\left(k,M,P\right),
\end{equation}
where here $\mathcal{W}(\phi)$ is a $3\times 3$ matrix depending on the underlying slowly modulated wave.  According to Whitham's modulation theory,
the stability of the underlying wave $\phi_0$ to slow modulations is determined by the eigenvalues of the constant coefficient matrix
$\mathcal{W}(\phi_0)$.  Indeed, linearizing the Whitham system \eqref{e:W_intro} about $\phi_0$, treated as a constant solution in the slow variables $(X,T)$, we see that the eigenvalues
of the linearization will be of the form
\[
\nu_j(\xi)=i\xi\alpha_j,~~j=1,2,3
\]
where the $\alpha_j$ are the eigenvalues of the matrix $\mathcal{W}(\phi_0)$ and $\xi\in\RM$ is a parameter.  As such, hyerbolicity\footnote{Meaning the the eigenvalues
of $\mathcal{W}(\phi_0)$ are all real.} of the Whitham system \eqref{e:W_intro} at $\phi_0$ indicates (marginal) spectral stability of $\phi_0$
as a solution of \eqref{e:W_intro}.  On the other hand, ellipticity\footnote{Meaning $\mathcal{W}(\phi_0)$ has an eigenvalue with
non-zero imaginary part.} of \eqref{e:W_intro} at $\phi_0$ implies the spectral instability of $\phi_0$ 
as a solution of \eqref{e:W_intro}, indicating an instability of $\phi_0$ to slow modulations.  

\begin{remark}\label{r:Grava}
We point out that the Whitham modulation equations for the CH equation have previously been derived in \cite{AG05} by using an averaged Lagrangian method.
There, the authors express the modulation system in terms of its so-called Riemann-invariants which, while algebraically nice does not seemingly allow
for a direct connection to the rigorous spectral stability problem.  As such, here we choose to provide an alternate derivation of the Whitham averaged system
which, as we will see, does allow for a direct connection to a rigorous spectral stability calculation.  
\end{remark}

We then turn our attention to rigorously validating the above predictions from Whtiham's theory of modulations.  In particular, we study the stability
of the underlying periodic traveling wave $\phi_0$, as an equilibrium solution of the CH equation in appropriate traveling coordinates, 
to perturbations in $L^2(\RM)$.  By performing a rigorous, functional analytic analysis of the spectral stability problem in a 
neighborhood of the origin in the spectral plane, we prove that the spectrum of the corresponding linearized operator near the origin
consists of precisely three $C^1$ curves that admit the expansion
\[
\lambda_j(\xi) = \nu_j(\xi)+\mathcal{O}(\xi) = i\xi\alpha_j+\mathcal{O}(\xi),~~|\xi|\ll 1
\]
where the $\alpha_j\in\CM$ are precisely the eigenvalues of the matrix $\mathcal{W}(\phi_0)$ coming from Whitham's theory of modulations.
As such, we establish the following as our main result.


\begin{theorem}\label{T:main}
Suppose that $\phi_0$ is a $T_0=1/k_0$-periodic traveling wave solution of \eqref{e:ch} with wave speed $c_0>0$, and that the set of nearby periodic traveling
wave profiles $\phi$ with speed close to $\phi_0$ is a 3-dimensional smooth manifold parameterized by $(k,M(\phi),P(\phi))$, where $1/k$ denotes the fundamental period
of the wave and $M$ and $P$ denotes the mass and momentum of the wave.  Additionally, assume that the hypotheses of Theorem \ref{t:co-per} below holds.
If the  Whitham system \eqref{e:W_intro} is strictly 
hyperbolic\footnote{Meaning the eigenvalues of $\mathcal{W}(\phi_0)$ above are real and distinct.}
at $(k_0,M(\phi_0),P(\phi_0))$ then the wave $\phi_0$ is spectrally modulationally stable, i.e. the $L^2(\RM)$-spectrum of the linearization of \eqref{e:ch} about $\phi_0$ 
is purely imaginary in a sufficiently small neighborhood of the origin in the spectral plane.  Furthermore, a sufficient condition for $\phi_0$ to be spectrally 
unstable to perturbations in $L^2(\RM)$ is that the Whitham modulation system \eqref{e:W_intro} be elliptic at $(k_0,M(\phi_0),P(\phi_0))$.  
\end{theorem}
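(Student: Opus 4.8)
The plan is to follow the now-standard spectral perturbation strategy for connecting Whitham modulation equations to the essential spectrum near the origin, adapted to the quasilinear and nonlocal structure of the Camassa-Holm equation. First I would set up the spectral problem: writing \eqref{e:ch} in traveling coordinates $\xi = x - c_0 t$, the profile $\phi_0$ becomes an equilibrium, and linearizing produces an operator $\mathscr{L}[\phi_0]$ acting on $L^2(\RM)$. Because the CH equation contains the operator $(1-\d_x^2)$ hitting $u_t$, it is natural to rewrite the equation as $m_t = \mathcal{N}(m)$ with momentum density $m = u - u_{xx}$, or equivalently to pre-apply $(1-\d_x^2)^{-1}$, so that the linearized evolution is $v_t = \mathscr{L}[\phi_0] v$ with $\mathscr{L}[\phi_0]$ a well-defined (in general nonlocal) operator whose coefficients are $1/k_0$-periodic. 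The essential spectrum of such an operator is captured by Bloch-Floquet decomposition: $\mathscr{L}[\phi_0]$ on $L^2(\RM)$ is unitarily equivalent (via the Bloch transform) to the direct integral $\int^\oplus_{[-\pi k_0,\pi k_0)} \mathscr{L}_\xi[\phi_0]\,d\xi$ of operators on the fixed period cell $L^2_{\rm per}$, where $\mathscr{L}_\xi[\phi_0] = e^{-i\xi x}\mathscr{L}[\phi_0]e^{i\xi x}$ has compact resolvent and hence discrete spectrum. So the $L^2(\RM)$-spectrum near $0$ is the union over small $\xi$ of the point spectra of $\mathscr{L}_\xi[\phi_0]$ near $0$.

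Next I would analyze the $\xi = 0$ operator. At $\xi = 0$, $\lambda = 0$ is an eigenvalue of $\mathscr{L}_0[\phi_0]$; the three-dimensional manifold of periodic traveling waves parameterized by $(k, M, P)$ forces the generalized kernel of $\mathscr{L}_0[\phi_0]$ (acting on $1/k_0$-periodic functions) to be at least three-dimensional. The translation invariance gives $\phi_0' \in \ker \mathscr{L}_0[\phi_0]$, and differentiating the profile relations with respect to $M$ and $P$ (at fixed period, adjusting the speed) produces two more generalized eigenfunctions, typically arranged in Jordan chains reflecting the secular growth in the modulation ansatz. This is precisely the content I would invoke from the hypotheses of Theorem~\ref{t:co-per}: that the generalized kernel of $\mathscr{L}_0[\phi_0]$ restricted to $L^2_{\rm per}$ is exactly three-dimensional (no higher-order degeneracies, no extra kernel directions). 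Then, by standard analytic perturbation theory (Kato) — using that $\xi \mapsto \mathscr{L}_\xi[\phi_0]$ is an analytic family of type (A), with the nonlocal terms depending analytically on $\xi$ through $(1 - (\d_x + i\xi)^2)^{-1}$ — there is a spectral projection $\Pi(\xi)$ onto the total eigenspace bifurcating from the three-dimensional generalized kernel, analytic in $\xi$ for $|\xi| \ll 1$, and the portion of $\spec \mathscr{L}_\xi[\phi_0]$ near $0$ consists of the eigenvalues of the $3\times 3$ reduced matrix $B(\xi) = \Pi(\xi)\mathscr{L}_\xi[\phi_0]\Pi(\xi)$ expressed in a suitable analytic basis.

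The crux is then to compute the first-order term of $B(\xi)$ and identify it with the Whitham matrix. I would expand $B(\xi) = B_0 + i\xi B_1 + \mathcal{O}(\xi^2)$, where $B_0$ is nilpotent (a single $3\times 3$ Jordan block, or the relevant nilpotent structure coming from the generalized kernel). Choosing the basis adapted to the Jordan structure — kernel elements paired with generalized eigenvectors — the eigenvalues of $B(\xi)$ to leading order are governed by $\det(B_0 + i\xi B_1 - \lambda) = 0$; after rescaling $\lambda = i\xi\alpha$ and sending $\xi \to 0$, this becomes the characteristic polynomial of a fixed $3 \times 3$ matrix. The key algebraic step — and the main obstacle — is to show that this limiting matrix is exactly (conjugate to) the Whitham modulation matrix $\mathcal{W}(\phi_0)$ from \eqref{e:W_intro}. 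This requires carefully matching the solvability conditions (Fredholm alternative pairings against the adjoint generalized kernel of $\mathscr{L}_0[\phi_0]^\dagger$) that appear in the spectral perturbation expansion with the averaged conservation-law relations that produce \eqref{e:W_intro} in the multiple-scales derivation. Concretely, the entries of $B_1$ are integrals over a period of products of kernel/adjoint-kernel elements with $\d_\xi$-derivatives of the symbol, and one must recognize these as the $X$-derivative coefficients $\d_k, \d_M, \d_P$ of the averaged quantities; this is where the bulk of the computation lies and where the CH-specific nonlocality makes the bookkeeping heavier than in, say, KdV or BBM. Once this identification is established, the conclusion is immediate: if $\mathcal{W}(\phi_0)$ has real, distinct eigenvalues $\alpha_j$, then for $|\xi| \ll 1$ the three curves $\lambda_j(\xi) = i\xi\alpha_j + \mathcal{O}(\xi^2)$ remain on the imaginary axis — here I would additionally use a symmetry of the spectral problem (the Hamiltonian structure of CH forces $\spec \mathscr{L}_\xi$ to be symmetric under reflection across the imaginary axis), which upgrades ``$i\xi\alpha_j + \mathcal{O}(\xi^2)$ with $\alpha_j$ real'' to ``purely imaginary,'' giving spectral modulational stability; conversely, if some $\alpha_j \notin \RM$, the corresponding curve leaves the imaginary axis at order $\xi$, producing spectrum with nonzero real part for small $\xi \neq 0$, hence $L^2(\RM)$-instability. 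I would expect the analytic-perturbation setup and the symmetry argument to be routine, and the Whitham-matrix identification to consume most of the proof.
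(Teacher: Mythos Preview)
Your proposal is correct and follows essentially the same route as the paper: Floquet--Bloch reduction, identification of the three-dimensional generalized kernel at $\xi=0$ via Theorem~\ref{t:co-per}, Kato perturbation to a $3\times 3$ matrix, identification of its leading-order limit with $\mathcal{W}(\phi_0)$ (up to a shift by $cI$), and the spectral symmetry to upgrade real distinct eigenvalues to purely imaginary curves. One small correction to keep in mind during execution: the Jordan structure at $\xi=0$ is not a single $3\times 3$ block but has geometric multiplicity two (one $2\times 2$ block and one $1\times 1$), so the reduced matrix $B_0$ already has nonzero off-diagonal $\mathcal{O}(1)$ entries and the passage to the limiting characteristic polynomial requires a $\xi$-dependent basis rescaling (the paper conjugates by $S(\xi)=\mathrm{diag}(ik\xi,1,1)$) rather than the naive substitution $\lambda=i\xi\alpha$ alone.
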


\begin{remark}
The  hypothesis from Theorem \ref{t:co-per} mentioned above has to do with the dimensionality and the structure of the generalized kernel of the linearization
of \eqref{e:ch} to co-periodic perturbations, i.e. perturbations that have the same fundamental period as $\phi$.  As we will see, understanding the 
generalized kernel of the linearization is the first major step in our rigorous analysis.  This is discussed in detail in Section \ref{s:unmod}.
\end{remark}

%

Importantly, we note that in \cite{AG05} the authors proved that the Whitham modulation system associated to the CH equation, derived there through averaged Lagrangian
techniques\footnote{See Remark \ref{r:Grava}.} is \emph{always} hyperbolic.
That is, the Whitham system for the CH equation is never elliptic.  The system, however, can fail to be \emph{strictly} hyperbolic, indicating a degeneracy in the Whitham system where two 
of (necessarily real) eigenvalues of $\mathcal{W}(\phi)$ in \eqref{e:W_intro} coalesce.  We illustrate this explicitly in Section \ref{s:W_stokes}
by analyzing the Whitham system explicitly for Stokes waves, i.e. for wave trains of CH with asymptotically small oscillations about their mean.  There, we will
see that such  $2\pi/k$-perioidc waves trains with small oscillations are strictly hyperbolic for all $k\neq k_*:= \sqrt{3}/2\pi$.  By Theorem \ref{T:main} it follows
that all such Stokes waves with $k\neq k^*$ are in fact spectrally modulationally stable. For larger amplitude waves, we similarly expect that
the Whitham system is generically strictly hyperbolic, corresponding to the spectral modulational stability of the underlying wave trains.  Regardless, our theory
shows that all wave trains for which the Whitham system is strictly hyperbolic are necessarily spectrally modulationally stable.

\begin{remark}
In the literature, one will sometimes find the Camassa-Holm equation written as
\[
u_t-u_{txx}+3uu_x+2\nu u_x=2u_xu_{xx}+uu_{xxx},
\]
where here $\nu\in\RM$ is a constant parameter.  We note, however, that this linear term can be transformed away via the transformation
\[
u(x,t)\mapsto u(x+\nu t,t)-\nu.
\]
Therefore, without loss of generality, in our work we take $\nu=0$ throughout.
\end{remark}

\

The organization of this paper is as follows.  In Section \ref{s:basic} we discuss some basic properties of the Camassa-Holm equation, such as the existence 
theory for periodic traveling waves and also the Hamiltonian structure and associated conserved quantities which will be important throughout our work.  
In Section \ref{s:W} we derive the Whitham modulation system for the CH equation by a direct multiple-scales / WKB approximation.  We further provide
an explicit study of the Whitham system in the case of Stokes waves for the CH equation, i.e. for periodic traveling wave solutions with asymptotically
small oscillation about their mass.  In Section \ref{s:rigorous}  we then perform a rigorous spectral stability calculation, 
using Floquet-Bloch theory and spectral perturbation theory, to derive a $3\times 3$ matrix
whose eigenvalues rigorously encode the spectrum in a sufficiently small neighbood of the origin in the spectral plane of the associated linearized operator.  
Theorem \ref{T:main} is then established in Section \ref{s:proof} by providing a direct, term-by- term comparison of the Whitham matrix $\mathcal{W}$ discussed above and 
the $3\times 3$ matrix coming from the rigorous theory.  Finally, some technical results pertaining to the analytic parameterization of the Stokes waves
studied in Section \ref{s:W_stokes} are contained in Appendix \ref{a:stokes}.

%
%
%
%
%
%
%

\

\noindent
{\bf Acknowledgments:} The work of MJ was partially supported by the NSF under grant DMS-2108749.  The authors are thankful to Mark Hoefer for making us aware
of the work \cite{GPT09}, and also thank Wesley Perkins for helpful conversations.

\section{Basic Properties of the Camassa-Holm Equation}\label{s:basic}
In this section, we record some important basic results regarding the CH equation \eqref{e:ch}.  We start by recalling the existence theory for periodic
traveling wave solutions of \eqref{e:ch}.  Of particular importance here will be the parameterization of the manifold of such solutions, which will be given
in terms of natural integration constants arising from reducing the associated profile equation to quadrature.  We will then discuss the Hamiltonian structure
and, more importantly, the conserved quantities admitted by \eqref{e:ch} that will be used throughout our work.

\subsection{Existence of Periodic Traveling Waves}
\label{s:existence} 

The existence of smooth periodic traveling wave solutions of the Camassa-Holm equation \eqref{e:ch} has by now been well studied: see, for example,
\cite{EJ24,GMNP22} and references therein.  For completeness, however, we review this existence theory here.

Traveling wave solutions of \eqref{e:ch} correspond to solutions of the form
\be
\label{e:traveling}
u(x,t) = \phi(x-ct),
\ee
where here $\phi$ is the wave profile and $c>0$ the wave speed.  Introducing the traveling coordinate frame $\eta=x-ct$, it follows that the profile $\phi(\eta)$ is necessarily a stationary solution 
of the evolution equation
\be
\label{e:travelframe}
u_t - u_{t\eta\eta} - cu_{\eta} + cu_{\eta\eta\eta} + 3uu_{\eta} = 2u_{\eta}u_{\eta\eta} + uu_{\eta\eta\eta},
\ee 
i.e. $\phi$ should be a solution of the ODE
\[
-c(\phi'-\phi''') + 3\phi\phi' = 2\phi'\phi'' + \phi\phi''',
\] 
or equivalently,
\be
\label{e:profile} 
-(c-\phi)(\phi-\phi'')'+ 2\phi'(\phi-\phi'') = 0,
\ee
where $'$ denotes differentiation with repsect to $\eta$. Here and throughout the work we refer to \eqref{e:profile} as the \textit{profile equation}. Note that, 
by elementary ODE theory, it follows that $\phi\in C^{\infty}(\RM)$ provided
that either $\phi(\eta)<c$, or $\phi(\eta)>c$, for all $\eta\in\R$.  As it can be shown\footnote{This observation follows by similar phase plane methods that we use
to study the case $\phi<c$ below.} that no smooth periodic traveling waves exists satisfying $\phi>c$, 
we will only consider the case when
\be
\label{e:condition}
\phi(\eta) < c, \quad \forall\,\eta\in\R.
\ee 

Equipped with condition \eqref{e:condition}, multiplying \eqref{e:profile} by $(c-\phi)$ and integrating yields
\be
\label{e:profile2}
\phi-\phi''=\frac{a}{(c-\phi)^2},
\ee
where $a\in\RM$ is a  constant of integration. Multiplying by $\phi'$ and integrating again, \eqref{e:profile2} can be reduced to the quadrature representation
\be
\label{e:quad}
\frac{1}{2}\left(\phi'\right)^2=E-\left(-\frac{1}{2}\phi^2+\frac{a}{c-\phi}\right),
\ee
where here $E\in\R$ is a second constant of integration.

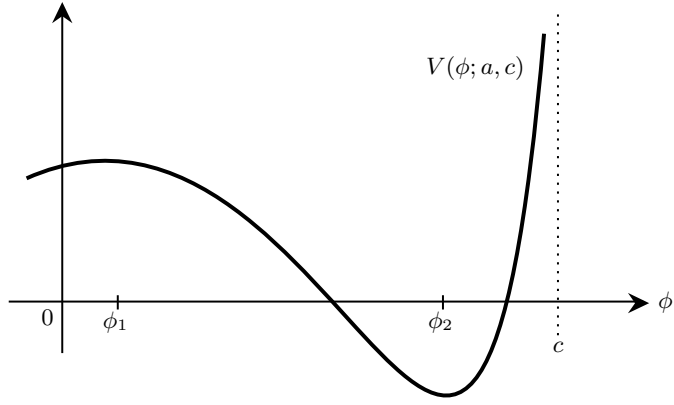
\begin{figure}[t!]
\begin{center} 
\tikzset{every picture/.style={line width=0.75pt}}
\begin{tikzpicture}[x=0.75pt,y=0.75pt,yscale=-1,xscale=1]
\draw    (177,195) -- (177,20.8) ;
\draw [shift={(177,17.8)}, rotate = 90] [fill={rgb, 255:red, 0; green, 0; blue, 0 }  ][line width=0.08]  [draw opacity=0] (10.72,-5.15) -- (0,0) -- (10.72,5.15) -- (7.12,0) -- cycle    ;
\draw    (150,169) -- (470,169) ;
\draw [shift={(473,169.8)}, rotate = 180.14] [fill={rgb, 255:red, 0; green, 0; blue, 0 }  ][line width=0.08]  [draw opacity=0] (10.72,-5.15) -- (0,0) -- (10.72,5.15) -- (7.12,0) -- cycle    ;
\draw  [dash pattern={on 0.84pt off 2.51pt}]  (427,186) -- (427,23.8) ;
\draw [line width=1.5]    (159,106.8) .. controls (321,34.8) and (385,437.8) .. (420,33.8) ;
\draw    (205,173) -- (205,165.8) ;
\draw    (369,173) -- (369,165.8) ;
\draw (476,162.5) node [anchor=north west][inner sep=0.75pt]  [font=\footnotesize]  {$\phi$};
\draw (196,172.4) node [anchor=north west][inner sep=0.75pt]  [font=\footnotesize]  {$\phi _{1}$};
\draw (360,172.4) node [anchor=north west][inner sep=0.75pt]  [font=\footnotesize]  {$\phi _{2}$};
\draw (423,188) node [anchor=north west][inner sep=0.75pt]  [font=\footnotesize]  {$c$};
\draw (359,43.4) node [anchor=north west][inner sep=0.75pt]  [font=\footnotesize]  {$V(\phi;a,c)$};
\draw (165,171.4) node [anchor=north west][inner sep=0.75pt]  [font=\footnotesize]  {$0$};
\end{tikzpicture}
\vspace{-14em}
\caption{Depiction of the effective potential $V(\phi;a,c)$ for an admissible value of $a\in\R$. Note that there is a vertical asymptote at $\phi=c$, and that all the periodic solutions here exist for $\phi<c$, oscillating around the critical points $\phi_2$ of the potential $V$.}
\label{f:potential}
\end{center} 
\end{figure}

By elementary phase plane analysis, the existence of smooth periodic solutions of \eqref{e:profile2} satisfying \eqref{e:condition} follows provided that the effective
potential function
\be
\label{e:potential}
V(\phi;a,c) := -\frac{1}{2}\phi^2 + \frac{a}{c-\phi}
\ee
has a local minimum for $\phi<c$.  The potential $V$ has been studied in several works (see \cite{EJ24,GMNP22}, for instance), where it is shown that for each $c>0$
and $a\in(0,4c^3/27)$ the potential has exactly two critical points $\phi_1,\phi_2\in(0,c)$.  More precisely, for such $a$ and $c$ the potential $V(\cdot;a,c)$
has a local maximum $\phi_1\in(0,c/3)$ and a local minimum $\phi_2\in(c/3,c)$ and, furthermore, satisfies
\[
V'(\phi;a,c)>0,~~{\rm for}~~\phi\in(0,\phi_1)\cup(\phi_2,c),~~~V'(\phi;a,c)<0~~{\rm for}~~\phi\in(\phi_1,\phi_2),
\]
and
\[
\lim_{\phi\to c^-}V(\phi;a,c)=+\infty.
\]
For a graphical representation, see Figure \ref{f:potential}.  
By the above phase-plane analysis it follows that if we define the set
\be
\label{e:manifold}
\mathcal{M} := \Big\{(a,E,c)\in\R^3 : c>0, \,\, a\in(0,4c^3/27), \,\, E\in \big(V(\phi_2(a,c);a,c),V(\phi_1(a,c);a,c)\big)\Big\},
\ee
then for $(a,E,c)\in\mathcal{M}$ the profile equation \eqref{e:profile} admits a one-parameter family, parameterized by translation invariance, of smooth periodic solutions $\phi(\eta;a,E,c)$ 
satisfying $\phi<c$ and with period
\be
\label{e:period}
T:=T(a,E,c) = \sqrt{2}\int_{\phi_{\min}}^{\phi_{\max}} \frac{d\phi}{\sqrt{E-V(\phi;a,c)}},
\ee
where here $\phi_{\min}$ and $\phi_{\max}$ denote, respectively the minimum and maximum roots of the equation $E-V(\cdot;a,c)=0$, and hence correspond 
to the minimum and maximum values of the corresponding periodic solution $\phi$.  
From the above analysis, it follows that the CH equation \eqref{e:ch} admits a four-parameter family, constituting a $C^1$ manifold, of periodic traveling wave solutions of the form
\be
\phi(x-ct+x_0;a,E,c), \quad x_0\in\R, \quad (a,E,c)\in\mathcal{M},
\ee
with period $T(a,E,c)$.

\begin{figure}[t!]
\begin{center}
\includegraphics[scale=0.6]{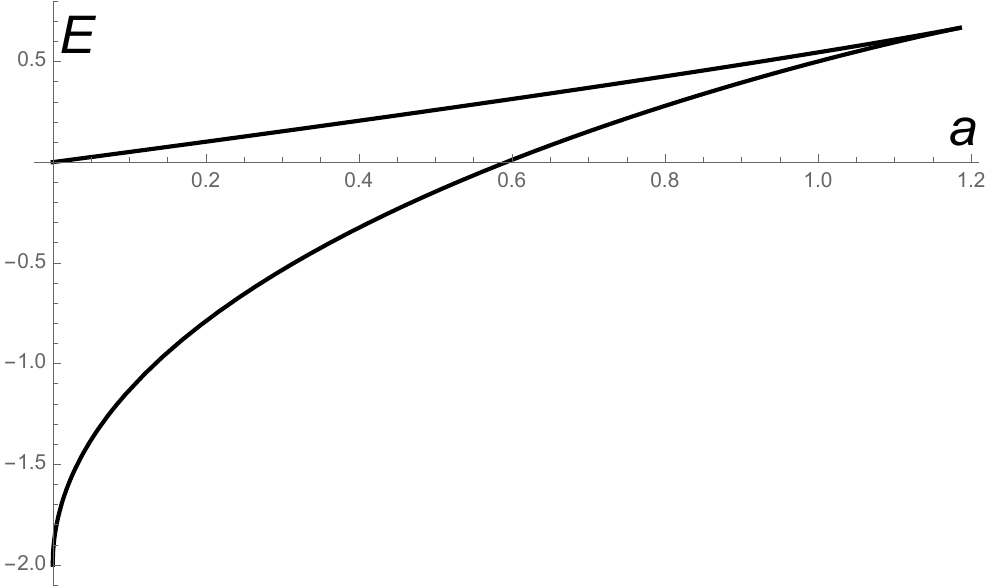}
\caption{A numerically generated plot of the intersection of the existence region $\mathcal{M}$ and the plane $c=2$.  
The upper and lower black curves correspond to solitary waves and constant solutions, respectively,
and act as the boundary of the existence region.  Note these upper and lower boundaries intersect at $(a,E) = (4c^3/27,c^2/6)$, while they intersect the left boundary
$a=0$ at $E=0$ and $E=-c^2/2$, respectively (recall here $c=2$). }\label{F:exist}
\end{center}
\end{figure}

In particular, note that since the values $\phi_{\min}$ and $\phi_{\max}$ are smooth functions of the traveling wave parameters 
$(a,E,c)\in\mathcal{M}$, it follows that\footnote{Indeed, the integral representation \eqref{e:period} can be regularized by a standard procedure at the
square root branch points, thus yielding the desired smoothness.} the period function $T(a,E,c) \in C^1(\mathcal{M})$.  Of particular interest in our work, as we will
see, is the monotonicity properties of the period $T$ on the parameter $a$.  For later reference, we record the following result.

\begin{lemma}[Period Monotonicity]\label{L:period}
The period function $\mathcal{M}\ni(a,E,c)\to T(a,E,c)$ satisfies the following:
\begin{itemize}
\item[(i)] $T_a>0$ provided that $E\in\left(-\frac{1}{2}c^2,-\left(1-\sqrt{\frac{2}{3}}\right)c^2\right)$.
\item[(ii)] $T_a<0$ provided that $E\in\left(0,\frac{1}{6}c^2\right)$.
\item[(iii)] $T_a$ has a unique local maximum in $a$ if $E\in\left(-\left(1-\sqrt{\frac{2}{3}}\right)c^2,0\right)$.
\end{itemize}
\end{lemma}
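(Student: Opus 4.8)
The plan is to put the period \eqref{e:period} into an algebraic (elliptic) form, regularize its endpoint branch points so that differentiation in $a$ is legitimate, and then read off the sign of $\partial_aT$ from a low-degree rational integrand. \emph{Step 1 (scaling and algebraic reduction).} The quadrature \eqref{e:quad} is invariant under $\phi(\eta)\mapsto\lambda\phi(\eta)$, $(a,E,c)\mapsto(\lambda^3a,\lambda^2E,\lambda c)$, which fixes $\eta$ and hence the period; taking $\lambda=1/c$ we may assume $c=1$ throughout, so that the thresholds in (i)--(iii) become fixed numbers. Multiplying \eqref{e:quad} by $2(c-\phi)>0$ gives $(c-\phi)(\phi')^2=Q(\phi;a,E,c)$, where $Q(\phi):=2(c-\phi)(E+\tfrac12\phi^2)-2a$ is a cubic in $\phi$ with negative leading coefficient. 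The phase-plane analysis of $V$ recalled above shows that for $(a,E,c)\in\mathcal{M}$ the cubic $Q$ has three simple real roots $\phi_3<\phi_{\min}<\phi_{\max}$ with $0<\phi_{\min}<\phi_{\max}<c$, $\phi_3+\phi_{\min}+\phi_{\max}=c$, and $Q(\phi)=(\phi-\phi_3)(\phi-\phi_{\min})(\phi_{\max}-\phi)>0$ on $(\phi_{\min},\phi_{\max})$, so that
\[
T=2\int_{\phi_{\min}}^{\phi_{\max}}\frac{d\phi}{|\phi'|}=2\int_{\phi_{\min}}^{\phi_{\max}}\sqrt{\frac{c-\phi}{(\phi-\phi_3)(\phi-\phi_{\min})(\phi_{\max}-\phi)}}\,d\phi.
\]

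\emph{Step 2 (regularization and differentiation).} With $m=\tfrac12(\phi_{\min}+\phi_{\max})$ and $r=\tfrac12(\phi_{\max}-\phi_{\min})$, the substitution $\phi=m+r\cos\theta$, $\theta\in[0,\pi]$, removes both branch points and gives
\[
T=2\int_0^\pi\sqrt{\frac{c-\phi}{\phi-\phi_3}}\,d\theta,\qquad \phi=m+r\cos\theta,
\]
whose integrand is smooth and strictly positive on $[0,\pi]$ (its denominator is bounded below by $\phi_{\min}-\phi_3>0$ and its numerator by $c-\phi_{\max}>0$), with $m,r,\phi_3$ smooth in $a$ for fixed $(E,c)$. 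Differentiating under the integral, and using $Q(\phi_i;a,E,c)=0$ together with $\partial_aQ\equiv-2$ so that $\partial_a\phi_i=2/Q'(\phi_i)$ with $Q'(\phi)=-3\phi^2+2c\phi-2E$ (whence $Q'(\phi_{\min})>0$ and $Q'(\phi_3),Q'(\phi_{\max})<0$), the quadratic-in-$\cos\theta$ contributions cancel identically and, undoing the substitution, one is left with
\[
\partial_aT=\int_{\phi_{\min}}^{\phi_{\max}}\frac{L(\phi;a,E,c)\,d\phi}{(c-\phi)^{1/2}(\phi-\phi_3)^{3/2}(\phi-\phi_{\min})^{1/2}(\phi_{\max}-\phi)^{1/2}},
\]
where, once $\cos\theta$ is written in terms of $\phi$, the numerator $L(\phi;a,E,c)=\phi_{3,a}(c-\phi)-(c-\phi_3)\,\partial_a\phi$ is \emph{affine} in $\phi$, with coefficients explicit rational functions of $(\phi_3,\phi_{\min},\phi_{\max})$, equivalently of $(a,E,c)$.

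\emph{Step 3 (sign of $\partial_aT$).} Every factor in the denominator above is positive on $(\phi_{\min},\phi_{\max})$, so the sign of $\partial_aT$ is governed entirely by the (at most one) sign change of the affine function $L$ on the closed orbit interval. For (i) and (ii) the plan is to show that $L$ has no zero in $[\phi_{\min},\phi_{\max}]$ for the entire admissible range of $a$, and to pin down its (then constant) sign; the borderline case, in which for some critical $a$ the zero of $L$ meets an endpoint $\phi_{\min}$ or $\phi_{\max}$, is cut out by an algebraic relation in $(E,c)$, and a short resultant computation shows this relation to be $3E^2+6c^2E+c^4=0$, whose only root in the admissible range is $E=-(1-\sqrt{2/3})\,c^2$; this is the origin of the threshold separating (i) from (iii). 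For (iii) the zero of $L$ lies strictly inside $(\phi_{\min},\phi_{\max})$ over a subinterval of admissible $a$, so $\partial_aT$ is a genuine difference of positive quantities; here I would reduce the two resulting integrals to complete elliptic integrals of a common modulus (equivalently, use the Picard--Fuchs relations satisfied by the basic periods $I_n:=\int_{\phi_{\min}}^{\phi_{\max}}\phi^n/\sqrt{(c-\phi)Q(\phi)}\,d\phi$, $n=0,1,2$, to write $\partial_aT$ as an algebraic combination of $I_0$ and $I_1$), and then track the single sign change of the resulting expression as $a$ varies to obtain the asserted unique local maximum.

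\textbf{Anticipated main obstacle.} Steps 1 and 2 are essentially bookkeeping; the real work is Step 3, and the two delicate points there are (a) showing that $L$ keeps a \emph{fixed} sign on the orbit, uniformly in the admissible $a$, throughout the $E$-ranges of (i) and (ii), and extracting which sign, and (b) resolving the competition between the two positive pieces of $\partial_aT$ in the intermediate regime precisely enough to conclude that $T_a$ has a \emph{unique} local maximum in $a$. I expect (b) to be the crux, with the reduction to complete elliptic integrals (or use of the Picard--Fuchs system) being the device that makes the required monotonicity statements algebraic.
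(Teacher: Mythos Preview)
The paper does not prove this lemma at all: immediately after the statement it writes ``For details, see \cite[Theorem 2.5]{GV15} as well as \cite[Remark 2.8]{GMNP22}'' and moves on. So there is no proof here to compare against; the result is imported wholesale from the literature (where, in \cite{GV15}, it is obtained via Chebyshev-type criteria for Abelian integrals rather than by the direct differentiation you outline).

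Your plan is a reasonable direct attack, and Steps 1--2 are sound: the scaling, the algebraic form $(c-\phi)(\phi')^2=Q(\phi)$, the $\phi=m+r\cos\theta$ regularization, and the resulting formula
\[
\partial_aT=\int_{\phi_{\min}}^{\phi_{\max}}\frac{L(\phi)\,d\phi}{(c-\phi)^{1/2}(\phi-\phi_3)^{3/2}(\phi-\phi_{\min})^{1/2}(\phi_{\max}-\phi)^{1/2}},
\qquad L(\phi)=\phi_{3,a}(c-\phi)-(c-\phi_3)\,\partial_a\phi,
\]
with $L$ affine in $\phi$, are all correct (there are in fact no quadratic terms in $\cos\theta$ to cancel, since $\partial_a\phi$ is already affine in $\cos\theta$). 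The identification of the threshold $E=-(1-\sqrt{2/3})c^2$ as a root of $3E^2+6c^2E+c^4=0$ is also consistent.

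There is, however, a genuine gap in your treatment of (iii). The assertion is that \emph{$T_a$} (not $T$) has a unique local maximum in $a$; equivalently, that $a\mapsto T_{aa}(a,E,c)$ has a single sign change, from positive to negative. Your Step 3 computes and analyzes $\partial_aT$, and you propose to ``track the single sign change of the resulting expression as $a$ varies''---but a sign change of $T_a$ locates extrema of $T$, not of $T_a$. To establish (iii) by your route you would need to differentiate once more (or, equivalently, set up the Picard--Fuchs system at the level of $\partial_a^2T$) and control the sign changes of that second derivative. This is a harder monotonicity statement than the one you have set up, and it is exactly the kind of result for which the Chebyshev machinery in \cite{GV15} is designed. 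You also do not explain the second threshold $E=0$ that separates (ii) from (iii); your ``borderline'' analysis produces only the $-(1-\sqrt{2/3})c^2$ value.
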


For details, see \cite[Theorem 2.5]{GV15} as well as \cite[Remark 2.8]{GMNP22}.  In particular, we note that by the analysis in \cite[Remark 2.5]{GMNP22}
the range $E\in\left(-\frac{1}{2}c^2,\frac{1}{6}c^2\right)$ constitutes the entire admissible range of the parameter $E$ for $(a,E,c)\in\mathcal{M}$.  See Figure \ref{F:exist}
above, as well as Figure 2.3 in \cite{GMNP22}.  It follows that outside of a co-dimension one surface in $\mathcal{M}\subset\RM^3$ the period function
satisfies $T_a(a,E,c)\neq 0$.

\subsection{Hamiltonian Structure and Conserved Qunatities}

Since we are interested in the local dynamics of the CH equation about periodic tracveling waves, throughout we consider the conserved quantities and Hamiltonian formulation
as being posed on appropriate subspaces of $L^2_{\rm per}(0,T)$ for some period $T>0$.  Being a completely integrable PDE, the CH equation admits infinitely many conserved quantities.
As we will see, however, our work only makes use of the following three conserved quantities for $T$-periodic solutions:
\bse
\label{e:conserved}
\begin{align}
\label{e:mass}
M(u) &= \int_0^T u\,dx, \\
\label{e:momentum}
P(u) &= \frac{1}{2}\int_0^T \left(u^2 + u_x^2\right)\,dx, \\
\label{e:energy}
F(u) &= \frac{1}{2}\int_0^T\left(u^3 + uu_x^2\right),
\end{align}
\ese 
representing the mass, momentum, and energy, respectively.  Similar to our consideration of the period function $T(a,E,c)$ in \eqref{e:period}, the 
conserved quantities above can be restricted to the manifold $\mathcal{M}$ of periodic traveling wave solutions of \eqref{e:ch}.  Indeed,
given a $T(a,E,c)$-periodic traveling wave $\phi(\cdot;a,E,c)$, we can define the functions $M,P:\mathcal{M}\to\RM$ via\footnote{The conserved quantity $F$ can of course also
be restricted to $\mathcal{M}$ to yield a $C^1$ function on $\mathcal{M}$.  Since this is not used in our analysis, however, we omit writing out the formulas.}
\begin{align*}
M(a,E,c)&=\int_0^{T(a,E,c)}\phi(\eta;a,E,c)d\eta = \sqrt{2}\int_{\phi_{\rm min}}^{\phi_{\rm max}}\frac{\phi~d\phi}{\sqrt{E-V(\phi;a,c)}}\\
P(a,E,c)&=\frac{1}{2}\int_0^{T(a,E,c)}\phi(\eta;a,E,c)d\eta = \frac{\sqrt{2}}{2}\int_{\phi_{\rm min}}^{\phi_{\rm max}}\left(\frac{\phi^2}{\sqrt{E-V(\phi;a,c)}}+\sqrt{E-V(\phi;a,c)}\right)~d\phi.
\end{align*}
As before, the above integrals can be regularlized near the square root singularities and hence represent $C^1$ functions on $\mathcal{M}$.

\begin{remark}\label{r:mass}
For a  given periodic traveling wave solution $\phi(\cdot;a,E,c)$ of the CH equation, we note from \eqref{e:profile2} that
\[
M=\int_0^T\frac{a}{(c-\phi)^2}dx
\]
is necessarily positive thanks to \eqref{e:manifold}.  That is, $M(a,E,c)>0$ for all $(a,E,c)\in\mathcal{M}$.  
\end{remark}

Regarding the energy functional $F$, we note that the profile equation \eqref{e:profile} is precisely the Euler-Lagrange equation for the action functional
\be
\Lambda(u) = cP(u) - F(u) - EM(u),
\ee
which we can use to write \eqref{e:ch} in traveling cooridinates in the Hamiltonian form
\be
u_t = \mathcal{J}\frac{\delta\Lambda}{\delta u}(u),
\ee
where  here
\be
\mathcal{J} := (1-\d_x^2)^{-1}\d_x = \d_x(1-\d_x^2)^{-1}
\ee
is a skew-adjoint operator on $L^2_{\rm per}(0,T)$.  In our forthcoming stability analysis, we will be interested in the linearization of \eqref{e:ch} about
a given $T$-periodic traveling wave solution $\phi$.  As is common in Hamiltonian stability problems, the stability of the background wave $\phi$
relies strongly on the spectral properties of the (symmetric) Hessian operator
\begin{equation}\label{e:L}
\mathcal{L}[\phi]:=\frac{\delta^2\Lambda}{\delta u^2}(\phi)=-\partial_\eta\left(c-\phi\right)\partial_\eta  + \left(c-3\phi+\phi''\right)
\end{equation}
considered as acting on $L^2_{\rm per}(0,T)$.  Of particular interest is the dimension of the kernel of $\mathcal{L}$: as we will see, it is always at least one-dimensional
due to the invariance of \eqref{e:ch} with respect to spatial translations, and in general one hopes this accounts for everything in the kernel. 
The following result, established in \cite{GMNP22},  connects the dimension of the $T$-periodic kernel of $\mathcal{L}$ to monoconicity properties of the
period function $T$ in \eqref{e:period}.

\begin{proposition}\label{P:kernelL}
For a given $(a,E,c)\in\mathcal{M}$, let $\phi(\cdot;a,E,c)$ be the associated periodic traveling wave solution of \eqref{e:ch} with period $T(a,E,c)$.
If $T_a\neq 0$, then the kernel of the Hessian operator $\mathcal{L}[\phi]:H^2_{\rm per}(0,T)\subset L^2_{\rm per}(0,T)\to L^2_{\rm per}(0,T)$ satisfies
\begin{equation}\label{e:kernelL}
{\rm ker}\left(\mathcal{L}[\phi]\right)={\rm span}\left\{\phi'\right\}.
\end{equation}
\end{proposition}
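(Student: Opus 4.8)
The plan is to exploit the fact that $\mathcal{L}[\phi]$ is a second-order Sturm--Liouville operator on $L^2_{\rm per}(0,T)$, so that its $T$-periodic kernel is at most two-dimensional. Differentiating the profile equation \eqref{e:profile} with respect to $\eta$ shows immediately that $\phi' \in \ker(\mathcal{L}[\phi])$, so the content is to rule out a second linearly independent periodic null function. To do this I would construct a second linearly independent solution $\psi$ of the ODE $\mathcal{L}[\phi]\psi = 0$ on the whole line by reduction of order, using $\phi'$ as the known solution; explicitly, $\psi(\eta) = \phi'(\eta)\int^\eta \frac{d\zeta}{(c-\phi(\zeta))\,\phi'(\zeta)^2}$ up to a constant Wronskian normalization (the weight $(c-\phi)$ being the coefficient in the principal part of \eqref{e:L}, and $c-\phi>0$ by \eqref{e:condition}). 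The kernel of $\mathcal{L}[\phi]$ on $L^2_{\rm per}(0,T)$ is then two-dimensional if and only if this $\psi$ is itself $T$-periodic, and one-dimensional otherwise.

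The next step is to show that $\psi$ is $T$-periodic precisely when $T_a = 0$, which is the heart of the matter. The natural way to see this is to differentiate the profile equation with respect to the parameter $a$: since $\phi-\phi'' = a/(c-\phi)^2$ from \eqref{e:profile2}, the function $w := \partial_a \phi(\cdot;a,E,c)$ satisfies an inhomogeneous equation of the form $\mathcal{L}[\phi]w = $ (something proportional to $1/(c-\phi)^2$), whereas differentiating in $E$ gives $\mathcal{L}[\phi](\partial_E\phi) = $ constant, and differentiating in the translation parameter recovers $\mathcal{L}[\phi]\phi' = 0$. These parameter derivatives are generically \emph{not} periodic as functions of $\eta$ — they pick up secular (linearly growing) terms governed by $T_a$, $T_E$ — and the precise statement is that the secular growth of the appropriate combination vanishes exactly when $T_a = 0$. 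Equivalently, a periodic solution of the inhomogeneous equation exists iff the forcing is orthogonal to $\ker(\mathcal{L}[\phi]^\dagger)$ (Fredholm alternative), and since $\mathcal{L}[\phi]$ is self-adjoint this orthogonality condition, after integrating against $\phi'$ and using the quadrature \eqref{e:quad}, reduces to a derivative of the period integral \eqref{e:period} — namely $T_a$. Thus $T_a \neq 0$ forces the absence of a second periodic solution, giving \eqref{e:kernelL}.

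Since the Proposition is attributed to \cite{GMNP22}, I would in practice cite that reference for the detailed computation while sketching the Sturm--Liouville / reduction-of-order argument above, and flag the Fredholm-alternative identity connecting the periodicity of the second solution to $\partial_a$ of the period integral as the one step that requires genuine care. The main obstacle is precisely establishing that identity: one must differentiate the implicit quadrature relation \eqref{e:period} — whose integrand has inverse-square-root singularities at $\phi_{\min},\phi_{\max}$ — with respect to $a$, justify differentiation under the integral sign after the standard branch-point regularization already invoked in the text, and match the resulting boundary/monodromy terms against the Wronskian of $\{\phi',\psi\}$ evaluated over one period. Everything else (the dimension bound from ODE theory, the explicit null function $\phi'$, the reduction of order) is routine.
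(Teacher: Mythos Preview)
Your approach is correct and in fact more elementary than what the paper invokes: the paper does not prove this proposition directly but cites \cite{GMNP22}, remarking only that the argument there relies on Floquet theory together with Sylvester's inertial law theorem. Your Sturm--Liouville/ODE route avoids the inertia-count machinery entirely.

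One simplification: you slightly overcomplicate the parameter-differentiation step. Rather than differentiating the twice-integrated form \eqref{e:profile2} (where $a$ appears explicitly and you get an inhomogeneous right-hand side involving $1/(c-\phi)^2$), differentiate the once-integrated form
\[
-(c-\phi)\phi'' + c\phi - \tfrac{3}{2}\phi^2 + \tfrac{1}{2}(\phi')^2 = E
\]
with respect to $a$ at fixed $(E,c)$. Since $a$ does not appear explicitly here, the result is exactly $\mathcal{L}[\phi]\phi_a = 0$: the function $\phi_a$ is \emph{itself} a homogeneous solution, linearly independent of $\phi'$ by parity (it is even while $\phi'$ is odd). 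No Fredholm alternative or inhomogeneous solvability condition is needed. Then differentiating the periodicity relation $\phi(\eta+T;a,E,c)=\phi(\eta;a,E,c)$ in $a$ gives
\[
\phi_a(\eta+T) - \phi_a(\eta) = -T_a\,\phi'(\eta),
\]
so $\phi_a$ is $T$-periodic precisely when $T_a=0$. This replaces your reduction-of-order construction (which has to contend with the zeros of $\phi'$ in the integrand) and the delicate differentiation of the singular period integral you flagged as the main obstacle; neither is actually needed.
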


For details of the proof, which relies on Floquet theory as well as Sylvester's inertial law theorem, see \cite[Theorem 3.6]{GMNP22}.  Note
that by Lemma \ref{L:period} above, it follows that the kernel of $\mathcal{L}[\phi]$ is one-dimensional outside of a co-dimension 
one surface in $\mathcal{M}\subset\RM^3$.  In particular, the equality \eqref{e:kernelL} holds for almost every periodic
traveling wave solution of the CH equation \eqref{e:ch}.  Throughout our upcoming work, we will assume that $T_a\neq 0$
and hence that the kernel of $\mathcal{L}[\phi]$ is generated only through translation invariance.

\begin{remark}
The anlaysis in \cite{GMNP22} further shows that  the kernel of $\mathcal{L}[\phi]$ is two-dimensional whenever $T_a=0$.  This additional
direction is not, however, associated to a continuous Lie symmetry of the underlying equation.  As such, we will not consider this co-dimension one (hence degenerate)
case in our work.
\end{remark}

%

Before continuing, we briefly note that the Camassa-Holm equation admits two additional Hamiltonian structures, both of which are best expressed
in terms of the momentum density $m=u-u_{xx}$ of solutions of \eqref{e:ch}.  See, for example, \cite{EJ24,GMNP22,LP22}, where the authors use
these additional Hamiltonian structures to study the nonlinear orbital stability of periodic and solitary wave solutions of the CH equation.  In our work,
we focus exclusively on the Hamiltonian structure above, expressed locally in terms of the unknown $u$, since the associated conserved quantities
naturally arise from Whitham's  theory of modualtions.

\subsection{On Parameterizations of Periodic Traveling Waves}\label{s:param}

The goal of this work is to connect predictions from Whitham's theory of wave modulations to the rigorous spectral stability of periodic traveling waves.  A very interesting
but subtle point here is that often these theories use different parameterizations of the manifold of periodic traveling waves.  We have already
seen in Section \ref{s:existence} that for the Camassa-Holm equation the set of periodic traveling waves can be found by integrating to quadrature, thus giving
a parameterization of the manifold of periodic traveling waves by the coordinates $(a,E,c)\in\mathcal{M}$: see \eqref{e:manifold}.  

In Whitham's theory, however, we will see in the next section that   the slow modulational dynamics are described in terms of a first order system of 
conservation laws that encode the slow-time evolution of the frequency $k$, defined as one over the period $T$ of the wave, and 
the conserved quantities $M$ and $P$, from \eqref{e:mass}-\eqref{e:momentum} above, evaluated along the slowly varying wave: see \eqref{e:Wsystem} in 
Section \ref{s:W} below.
That is, a fundamental (albeit oftentimes implicit) tenet in Whitham's theory is that the manifold of periodic traveling waves can be locally parameterized by 
the frequency $k$ and the conserved quantities $M$ and $P$ of the wave.  

This, of course, poses an interesting problem if ones goal is to compare the mathematical stability theory, where the manifold of periodic traveling wave solutions is parametrized by
the traveling wave parameters $(a,E,c)$, and Whitham's theory of modulations, where the manifold of such solutions is parameterized 
by $(k,M,P)$.  As such, when doing such comparisons it is natural to assume that the nonlinear mapping
\[
\mathcal{M}\ni(a,E,c)\mapsto (k,M,P)\in\RM^3
\]
is locally invertible near the periodic traveling waves under consideration.  By the Implicit Function Theorem, it is sufficient to assume that the Jacobian of this mapping 
is non-zero, i.e. that
\begin{equation}\label{e:invert}
\det\left(\frac{\partial(k,M,P)}{\partial(a,E,c)}(\phi)\right)\neq 0
\end{equation}
for periodic traveling wave solutions $\phi$ of \eqref{e:ch}.  In effect, it is only possible to compare these two theories for periodic traveling 
waves $\phi$ for which \eqref{e:invert} holds.  As such, throughout our work we will at all times assume that \eqref{e:invert} holds and will only make
this assumption explicit when necessary.

\section{The Whitham Modulation Equations}\label{s:W}

In this section, we begin our study of the large-space/long-time dynamics of an arbitrary amplitude, slowly modulated periodic traveling wave solution of the CH equation \eqref{e:ch}. 
A formal approach to study the dynamical behavior of such slowly modulated periodic traveling waves is to analyze the 
associated Whitham modulation equations \cite{Whitham}. While the technique was origninally developed using averaged conservation laws it was 
later shown to be equivalent to an asymptotic reduction derived through formal multiple-scales (WKB) expansions (see \cite{L66}).   For the CH equation \eqref{e:ch},
the Whitham modulation equations were derived in \cite{AG05} through the method of averaged Lagrangians.  For our work, however, we believe it is more illustrative
to do this derivation through the use of WKB methods.  Indeed, when using multiple scales the Whitham modulation equations arise through the application of the Fredholm
alternative\footnote{And also Clairaut's Theorem, which yields conservation of waves.} which, at its core, boils down to studying projections of the asymptotic problem onto
appropriate finite-dimensional subspaces.  This is analogous to the upcoming rigorous theory, which utilizes spectral perturbation theory to project the infinite-dimensional spectral
problem onto a finite-dimensional subspace.  As such, for completeness (and to push our own vision of this program), we choose here to provide a self-contained
derivation of the Whitham modulation equations for the CH equation \eqref{e:ch}.

\subsection{Whitham Theory for CH}

Since modulations of periodic waves occur on very large space and time scales, when compared to the local spatial and temporal scales of the wave, we
begin by introducing  ``slow'' variables $(X,T):=(\eps x,\eps t)$ and notice that, in the slow coordinates, \eqref{e:ch} becomes 
\be
\label{e:slowscale}
u_T - \eps^2u_{XXT} + 3uu_X = 2\eps^2u_Xu_{XX} + \eps^2uu_{XXX}.
\ee
To understand how periodic traveling wave solutions modulate slowly in space and time, we follow Whitham \cite{W65} and seek a multiple-scales solution to \eqref{e:slowscale} 
of the form
\be
\label{e:ansatz}
u(X,T;\eps) = u^0(X,T,\theta) + \eps u^1(X,T,\theta) + \mathcal{O}(\eps^2),
\ee
where here
\be
\label{e:theta}
\theta:= \eps^{-1}\psi(X,T),
\ee
and the phase $\psi$ is chosen so that the functions $u^j$ appearing in the expansion are $1$-periodic functions of $\theta$.  

%

Next, substituting the expansion \eqref{e:ansatz} into \eqref{e:slowscale} and collecting like powers of $\eps$ yields a hierarchy of equations in algebraic orders of $\eps$ that must be simultaneuously satisfied. At $\mathcal{O}(1)$, we obtain
\[
 \psi_T u^0_{\theta} -\psi_T\psi_{X}^2 u^0_{\theta\theta\theta} + 3\psi_X u^0u^0_{\theta} = 2\psi_X^3u^0_{\theta}u^0_{\theta\theta} + \psi_X^3u^0u^0_{\theta\theta\theta}.
\]
Setting $k:=\psi_X$ and $\omega := -\psi_T$ as the spatial and temporal frequencies of the modulation, respectively, and $c:=\omega/k$ as the wave speed, the above
is immediately recognized as the profile equation \eqref{e:profile}, rescaled for $1$-periodic functions.   
Thus, for fixed $X$ and $T$ we may choose $u^0$ to be a periodic traveling wave solution of \eqref{e:ch}, and hence we have
\be
\label{e:u0}
u^0(\theta;X,T) = \phi(\theta;a(X,T), E(X,T), c(X,T)),
\ee 
for some even\footnote{Note the profile equation is invariant with respect to spatial translations and the discrete symmetry $\theta\mapsto -\theta$.} solution  $\phi$ of \eqref{e:quad} with 
$(a(X,T),E(X,T),c(X,T))\in\mathcal{M}$. Note here the traveling wave parameters $a$, $E$ and $c$ are functions of the slow variables $X$ and $T$, i.e. they are themselves
slowly varying functions of space and time.  Further, the consistency condition $\psi_{XT}=\psi_{TX}$ implies
\begin{equation}\label{e:conswaves}
k_T+\left(kc\right)_X=0,
\end{equation}
a sort of (slowly varying) nonlinear dispersion relation that is often referred to as conservation of waves.

Continuing, the $\mathcal{O}(\eps)$ equation reads
\begin{equation}\label{e:O(e)}
 -k\partial_\theta\mathcal{L}[u^0]u^1 = \mathcal{G}u^0_T + F(u^0)
\end{equation}
where here $\mathcal{L}[u^0]$ is the operator defined in \eqref{e:L} rescaled for $1$-periodic functions,
\[
\mathcal{G} := 1-k^2\d_{\theta}^2,
\]
and
\be
\label{e:F}
\begin{multlined}
F(u^0) := -2k\d_X(\omega u^0_{\theta\theta}) 
- k_X\omega u^0_{\theta\theta} - 3u^0u^0_X + 4k^2u^0_{\theta}u^0_{X\theta} \\
\hspace{18mm} + 2kk_X(u^0_{\theta})^2 + 2k^2u^0_Xu^0_{\theta\theta} + 3k^2u^0u^0_{\theta\theta X} + 3kk_Xu^0u^0_{\theta\theta}.
\end{multlined} 
\ee
In order to maintain the uniform asymptotic ordering of the ansatz \eqref{e:ansatz} we seek a solution $u^1$ to \eqref{e:O(e)}
that is bounded and $1$-periodic in $\theta$.  
Since \eqref{e:O(e)} is a forced linear equation for the unknown $u^1$, and observing that the adjoint of $\partial_\theta\mathcal{L}[u^0]$ on $L^2_{\rm per}(0,1)$
is
\[
\left(\partial_\theta\mathcal{L}[u^0]\right)^\dag = -\mathcal{L}[u^0]\partial_\theta,
\]
it follows from the Fredholm alternative that \eqref{e:O(e)} is solvable in the space of $1$-periodic functions if and only if
\begin{equation}\label{e:Fredholm}
\mathcal{G}u^0_T + F(u^0) \perp \ker\left(\mathcal{L}[u^0]\partial_\theta\right).
\end{equation}
Under the assumption that the hypothesis of Proposition \ref{P:kernelL} holds,\footnote{Recall the hypothesis
of Proposition \ref{P:kernelL}, namely that $T_a\neq 0$, holds generically.  In fact, due to the work in \cite{GMNP22} it holds off of a co-dimension one set in $\mathcal{M}$.}
it follows that 
\[
\ker\left(\mathcal{L}[u^0]\partial_\theta\right)={\rm span}\left\{1,u^0\right\}
\]
and thus \eqref{e:Fredholm} is equivalent to the orthogonality conditions
\begin{equation}\label{e:solv1}
 \left< 1, \mathcal{G}u^0_T + F(u^0)\right>=0 ~~~{\rm and}~~~\left< u^0, \mathcal{G}u^0_T + F(u^0)\right>=0.
\end{equation}
Note for brevity here and throughout our work we take $\left<\cdot,\cdot\right>:= \left<\cdot,\cdot\right>_{L^2_{\rm per}(0,1)}$. 

We now aim to express the solvability conditions \eqref{e:solv1} in a more convenient  form.  To this end, we observe that
\[
\left< 1, 4k^2u^0_{\theta}u^0_{X\theta} + 2kk_X(u^0_{\theta})^2 + 2k^2u^0_Xu^0_{\theta\theta} \right>= \left< 1, 2k^2(u^0_Xu^0_{\theta})_{\theta} + (k^2(u^0_{\theta})^2)_X\right>,
\]
and that integration by parts yields 
$$
\left< 3k^2u^0u^0_{\theta\theta X} + 3kk_Xu^0u^0_{\theta\theta} \right> = \left< 1, -\thalf(k^2(u^0_{\theta})^2)_X \right>,
$$
it follows from the explicit forms of $F$ and $\mathcal{G}$ above the first solvability condition above is equivalent to
\[
M_T = \left< 1,-\thalf(u^0)^2 - \half k^2(u^0_{\theta})^2 \right>_X,
\]
where $M(u^0)=\int_0^1u^0\,d\theta$ is the conserved quantity $M$, the mass in \eqref{e:mass} evaluated at the $1$-periodic traveling wave $u^0$. 
Similarly, using conservation of waves \eqref{e:conswaves} and integration by parts we find the identities
\begin{align*}
&\left< u^0, -2k\omega_Xu^0u^0_{\theta\theta} - 2k\omega u^0u^0_{\theta\theta X} - k_X\omega u^0u^0_{\theta\theta} \right> = \left< 1, -\half(k^2)_T(u^0_{\theta})^2 + (k^2\omega (u^0_{\theta})^2)_X \right>\\
&\left< u^0, 2k^2u^0_Xu^0_{\theta\theta} + 3k^2u^0u^0_{\theta\theta X} + 3kk_Xu^0u^0_{\theta\theta} \right> = \left< 1, (k^2(u^0)^2u^0_{\theta\theta})_X - 2k^2((u^0)^2)_{\theta}u^0_{\theta X} + kk_X(u^0)^2u^0_{\theta\theta}\right>, 
\end{align*} 
and
$$
\left< u^0, -u^0_T + k^2u^0_{T\theta\theta} \right> = \left< 1, -\half(((u^0)^2)_T + k^2((u^0_{\theta})^2)_T) \right>.
$$
Thus, again using the explicit forms of $F$ and $\mathcal{G}$ above, the second solvability condition in \eqref{e:solv1} is equivalent to
\[
P_T = \left< 1, k\omega(u^0_{\theta})^2 - (u^0)^3 + k^2(u^0)^2u^0_{\theta\theta}\right>_X,
\]
where  here $P(u^0)=\half\int_0^1 ((u^0)^2 + k^2(u^0_{\theta})^2)\,d\theta$ is the conserved quantity $P$, the momentum in \eqref{e:momentum}, evaluated at the $1$-periodic traveling wave $u^0$.

Taken all together, the consistency condition \eqref{e:conswaves} and the  solvability conditions \eqref{e:solv1} yield the  system of equations
\be
\label{e:Wsystem}
\begin{cases}
k_T + (kc)_X = 0\\
M_T = \left< 1, -\frac{3}{2}(u^0)^2 - \frac{1}{2}k^2(u^0_{\theta})^2\right>_X \\
P_T = \left< 1, ck^2(u^0_{\theta})^2 - (u^0)^3 + k^2(u^0)^2u^0_{\theta\theta}\right>_X \\
\end{cases}
\ee
which, is referred to as the Whitham modulation equations for the Camassa-Holm equation \eqref{e:ch}.   The system  \eqref{e:Wsystem} is, to leading order,
expected to govern the  evolution of the wave number $k$ and the conserved quantities $M$ and $P$ of a slow modulation of the periodic traveling wave $u^0$.
We now wish to understand the predictions of \eqref{e:Wsystem} applied to a given periodic traveling wave solution of \eqref{e:ch}.  To this end,
suppose that $(a_0,E_0,c_0)\in\mathcal{M}$ and let $\phi(\cdot;a_0,E_0,c_0)$ be the corresponding even, $T=1/k$-periodic solution of \eqref{e:quad}. 
The above multiple-scales analysis suggests that, for at least some short time,\footnote{The length of time this formal solution remains relates to the 
the length of time the slowly evolving functions $(a(\eps x,\eps t), E(\eps x,\eps t), c(\eps x,\eps t))$ remain near $(a_0,E_0,c_0)$.}
the Camassa-Holm equation admits a slowly modulated periodic traveling wave solution of the form
\be\label{e:approx}
u(x,t;\eps) = \phi(\eps^{-1}\psi(\eps x,\eps t);a(\eps x,\eps t), E(\eps x,\eps t), c(\eps x,\eps t)) + O(\eps), \quad \eps\to 0^+,
\ee
where the parameters $(a(\eps x,\eps t), E(\eps x,\eps t), c(\eps x,\eps t))\in\mathcal{M}$ are slowly evolving near $(a_0,E_0,c_0)$ 
in such a way that $k$, $M$, and $P$ satisfy the Whitham modulation system \eqref{e:Wsystem}.  Note that, importantly,
this formal description requires (or presumes) that the slow evolution of $(k,M,P)$ described in \eqref{e:Wsystem} is equivalent to a slow
evolution of the traveling wave parameters $(a,E,c)$, i.e. it requires that the map
\[
\mathcal{M} \ni (a,E,c) \mapsto (k,M,P) \in \R^3 
\]
be locally invertible near $(a_0,E_0,c_0)$.   By the Implicit Funciton Theorem, this is of course guaranteed by requiring that the Jacobian
of this mapping, given by \eqref{e:invert} in Section \ref{s:param} above, is non-zero at the point $(a_0,E_0,c_0)$.  Note that
condition \eqref{e:invert} is effectively equivalent to the Whitham system \eqref{e:Wsystem} describing an evolutionary system on the manifold $\mathcal{M}$
of periodic traveling wave solutions, as constructed in Section \ref{s:existence} above.

Under the non-degeneracy assumption \eqref{e:invert}, the stability of $\phi(a_0,E_0,c_0)$ to slow modulations is therefore expected to be 
determined by the linearization of the Whitham modulation system \eqref{e:Wsystem} about $\phi$.  To this end, note that using the chain rule we can rewrite \eqref{e:Wsystem}
as the quasilinear system
\be
\label{e:Wsys} 
\begin{pmatrix} k \\ M \\ P \end{pmatrix}_T = \mathcal{W}(u^0)\begin{pmatrix} k \\ M \\ P \end{pmatrix}_X,
\ee
where 
\begin{equation}\label{e:D_Whitham}
\mathcal{W}(u) = \begin{pmatrix} -kc_k-c & -kc_M & -kc_P \\ \langle 1,-\frac{3}{2}u^2-\frac{1}{2}k^2u^2_{\theta}\rangle_k & \langle 1,-\frac{3}{2}u^2-\frac{1}{2}k^2u^2_{\theta}\rangle_M & \langle 1,-\frac{3}{2}u^2-\frac{1}{2}k^2u^2_{\theta}\rangle_P \\ \langle 1,ck^2u_{\theta}^2 - u^3 + k^2u^2u_{\theta\theta}\rangle_k & \langle 1,ck^2u_{\theta}^2 - u^3 + k^2u^2u_{\theta\theta}\rangle_M & \langle 1,ck^2u_{\theta}^2 - u^3 + k^2u^2u_{\theta\theta}\rangle_P \end{pmatrix}.
\end{equation}
Noting that the periodic traveling wave $\phi$ is a constant solution with respect to the slow evolutionary variables $(X,T)$, 
it is natural to expect that the stability of $\phi$ to slow modulations is determined by the eigenvalues of the matrix $\mathcal{W}(\phi)$.  More specifically, note
that linearizing \eqref{e:Wsys} about the periodic traveling wave $\phi(\cdot;a_0,E_0,c_0)$, treated here as an equilibrium solution of \eqref{e:Wsys},
yields a linear evolution equation for $(k,M,P)$ whose eigenvalues will be of the form
\[
\widetilde{\lambda}_j(\xi) = i\xi\alpha_j
\]
where here the $\{\alpha_j\}_{j=1}^3$ are the eigenvalues of the matrix $\mathcal{W}(\phi)$.  In particular, if the eigenvalues of $\mathcal{W}(\phi)$ are all real, corresponding
to the Whitham system \eqref{e:Wsys} being weakly hyperbolic, then one should expect at least marginal (spectral) stability of the periodic traveling wave $\phi$
with respect to slow modulations.  If, on the other hand, $\mathcal{W}(\phi)$ has an eigenvalue with non-zero imaginary part, corresponding to the system \eqref{e:Wsystem} being elliptic
at $\phi$, then one expects $\phi$ to be (spectrally) unstable to slow modulations.

%
%

As mentioned in the introduction, the goal of this work is to rigorously justify the above predictions.  In particular, we aim to prove that a necessary condition
for a given periodic traveling wave solution $\phi_0(\cdot;a_0,E_0,c_0)$ to be stable with respect to slow modulations is the weak hyperbolicity
of the Whitham modulation system \eqref{e:Wsystem} at the point $(k_0,M(\phi_0),P(\phi_0))$.  This is the goal of the next section.

\subsection{Modulations of Stokes Waves for the CH Equation}\label{s:W_stokes}

In this section, we investigate the Whitham modulation equations in \eqref{e:Wsystem} in case of periodic traveling waves
with asymptotically small oscillations about their mean.  By the work of Abenda and Grava in \cite{AG05}, we expect to see
that the associated Whitham system (derived carefully below) will always be hyperbolic.

The Stokes waves for the CH equation are necessarily equilibrium solutions of the rescaled evolutionary equation
\eqref{e:stationary_rescale}, i.e. they are $1$-periodic solutions of the rescaled profile equation
\begin{equation}\label{e:profile_rescale2}
-\omega\left(\phi'-k^2\phi'''\right)+3k\phi\phi'=k^3\left(2\phi'\phi''+\phi\phi'''\right),
\end{equation}
where here primes denote differentiation with respect to the traveling variable $\theta=kx-\omega t$ and $\omega=kc$.  Using a straightforward Lyapunov-Schmidt
argument, one can prove that solution pairs $(\phi,\omega)$ of \eqref{e:profile_rescale2} with asymptotically small oscillations about its mean $M$ admit a convergent
expansion in $H^3_{\rm per}(0,1)$ of the form
\begin{equation}\label{e:small_expand}
\left\{\begin{aligned}
\phi(\theta;k,M,A)&=M+A\cos(2\pi\theta)+A^2\frac{(1+4k^2\pi^2)^2}{32k^2 M\pi^2}\cos(4\pi\theta)+\mathcal{O}(A)^3\\
\omega(k,M,A)&=\omega_0(k,M)+A^2\omega_2(k,M)+\mathcal{O}(A^4)
\end{aligned}\right.
\end{equation}
where $|A|\ll 1$ and
\[
\omega_0(k,M):=\frac{3kM+4k^3M\pi^2}{1+4k^2\pi^2}~~{\rm and}~~
 \omega_2(k,M):=\frac{3\left(1+4k^2\pi^2\right)^2}{64kM\pi^2}.
\]
For details, see Appendix \ref{a:stokes}.  

Substituting these expansions into the first two equations in  the Whitham modulation system \eqref{e:Wsystem} yields
\[
\left\{\begin{aligned}
&k_T+\partial_X\left(\omega_0+A^2\omega_2\right)=\mathcal{O}(A^3)\\
&M_T=\partial_X\left(-\frac{3}{2}M^2-\frac{3+4k^2\pi^2}{4}A^2\right)+\mathcal{O}(A^3),
\end{aligned}\right.
\]
while substituting the expansions into the momentum equation yields
\[
\left(\frac{1}{2}M^2+\frac{1+4k^2\pi^2}{4}A^2\right)_T = \left(-M^3-\frac{M}{2}\left(1+4k^2\pi^2+\frac{2}{1+4k^2\pi^2}\right)A^2\right)_X+\mathcal{O}(A^3).
\]
Using the chain rule, the above can be rewritten as the quasi-linear system
\[
\left(\begin{array}{c}k\\ M\\ A\end{array}\right)_T=B(k,M,A)\left(\begin{array}{c}k\\ M\\ A\end{array}\right)_X
\]
where the matrix function $B(k,M,A)$ expands as
\[
B(k,M,A)=-B_0(k,M)-B_1(k,M)A+\widetilde{B}(k,M,A)A^2
\]
with $\widetilde{B}(k,M,A)$ a bounded matrix-valued function and
\[
B_0=\left(\begin{array}{ccc}
				\frac{\partial\omega_0}{\partial k} & \frac{\partial\omega_0}{\partial M} & 0\\
				0 & 3M & 0\\
				0 & 0 & \frac{\partial\omega_0}{\partial k}
				\end{array}\right),
~~~~
B_1=\left(\begin{array}{ccc}
				0 & 0 & 2\omega_2\\
				0 & 0 & \frac{3+4k^2\pi^2}{2}\\
				\frac{1}{2}\frac{\partial^2\omega_0}{\partial k^2} & \frac{3-4k^2\pi^2}{(1+4k^2\pi^2)^2} & 0
				\end{array}\right).
\]
A straightforward calculation shows eigenvalues of the linearized Whitham matrix $B(k,M,A)$ expand as
\[
\left\{\begin{aligned}
\lambda_1(k,M,A)&=-3M+\mathcal{O}(A^2)\\
\lambda_{\pm}(k,M,A)&=-\frac{\partial\omega_0}{\partial k} \pm \left(\frac{3-4k^2\pi^2}{4(1+4k^2\pi^2)}\right)A+\mathcal{O}(A^2).
\end{aligned}\right.
\]
In particular, it follows that for the Stokes waves for the Camassa-Holm equation that the associated Whitham system is always hyperbolic and, in fact, 
is strictly hyperbolic for $k\neq \sqrt{3}/2\pi$.  

According to Theorem \ref{T:main} it follows the the Stokes waves for the CH equation with $k\neq\sqrt{3}/2\pi$ are all spectrally stable in a negiborhood of the origin, i.e., such 
waves are all (spectrally) modulationally stable.  The lack of ellipticity for the Whitham system in this case is consistent with the work of Abenda and Grava in \cite{AG05}.
Further, we see that strict hyperbolicity in this Stokes wave limit holds generically.

\section{Rigorous Modulational Stability Theory}\label{s:rigorous}

In this section, we conduct a mathematically rigorous analysis of the spectral problem associated with the linearization of \eqref{e:ch} about periodic traveling wave solutions,
as constructed in Section \ref{s:existence} above, when subject to localized (i.e., integrable) perturbations on the line.  
As a first step, we discuss the required functional analytic setup regarding the linearized operators and the description of their spectrum
via the Floquet-Bloch theory.  We then study the so-called unmodulated operators, which effectively corresponds to studying the generalized kernel
of the linearization of \eqref{e:ch} about a given $T$-periodic solution on the space $L^2_{\rm per}(0,T)$.  Using this as a starting point, we then use
spectral perturbation theory to obtain an asymptotic description of the spectrum of the linearization considered as an operator on $L^2(\RM)$ in a sufficiently
small neighborhood of the origin.

\subsection{Linearization}
\label{s:physical}

To begin our work,  let $\phi$ be a $T$-periodic traveling wave solution of \eqref{e:ch}. Building off of the discussion in Section \ref{s:W} right after equation \eqref{e:approx}, we will
assume throughout this section that the non-degeneracy condition \eqref{e:invert} holds at $\phi$, and hence that the manifold $\mathcal{M}$ of periodic traveling 
wave solutions can be reparameterized in terms of the frequency $k=1/T$  and the conserved quantities $M$ and $P$.  In particular,
it follows that we can consider $\phi$ as being smoothly parameterized as $\phi(\cdot;k,M,P)$.  To emphasize the dependence on the frequency $k$, we rescale
the spatial variable as $y=kx$, so that $T=1/k$ periodic traveling wave solutions of \eqref{e:ch} correspond to $1$-periodic traveling wave solutions
of the rescaled evolution equation 
\be
\label{e:ch_rescale}
u_t - k^2u_{tyy} + 3kuu_y = k^3(2u_yu_{yy} + uu_{yyy}).
\ee
Additionally rescaling the traveling coordinate $\theta=k(x-ct)=y-\omega t$, where $\omega=kc$ is the temporal frequency, 
it follows; that traveling wave solutions of \eqref{e:ch_rescale} correspond to solutions of the form 
\[
u(y,t)=\phi(y-\omega t),
\]
and hence the given periodic traveling wave $\phi(\theta)$ is necessarily a stationary $1$-periodic solution of the rescaled evolutionary equation
\be
\label{e:stationary_rescale}
u_t - k^2u_{t\theta\theta} - kc(u_{\theta}-k^2u_{\theta\theta\theta}) + 3kuu_{\theta} = k^3(2u_{\theta}u_{\theta\theta} + uu_{\theta\theta\theta}).
\ee 
Note, in particular, that $\phi$ is thereby a $1$-periodic solution of
\be
\label{e:profile_rescale}
-k^2(c-\phi)\phi''+c\phi-\frac{3}{2}\phi^2+\frac{k^2}{2}\left(\phi'\right)^2=E,
\ee
where now $'$ denotes differentiation with respect to $\theta$ and the traveling wave parameters $(a,E,c)$ are now considered
to be functions of $k$, $M$, and $P$.

Now, a straightforward calculation gives that the  linearization of \eqref{e:ch_rescale} about $\phi$ is given by
\[
 v_t=J\mathcal{L}[\phi]v, 
 \]
where
\be
\label{e:Jop}
J:=k\left(1-k^2\partial_\theta^2\right)^{-1}\partial_\theta,
\ee
and
\be
\label{e:Lop}
\mathcal{L}[\phi]:=-k^2\partial_\theta(c-\phi)\partial_\theta + (c-3\phi+k^2\phi'').
\ee 
where these operators are considered on $L^2(\RM)$.  Note, in particular, that the operator $\mathcal{L}[\phi]$ here is the same 
as the Hessian operator $\frac{\partial^2\Lambda}{\partial u^2}$ in \eqref{e:L} rescaled to act on $1$-periodic functions.\footnote{In particular, it is precisely the same
operator arising in our Whitham theory calculations.  See equation \eqref{e:O(e)}.}
As we are interested in the stability of $\phi$ to localized perturbations, this motivates the study of the spectral problem
\be
\label{e:specprob}  
\mathcal{A}[\phi]v = \lambda v
\ee 
posed on $L^2(\RM)$, where  here $\lambda\in\CM$ is  a spectral parameter, corresponding to the temporal frequency of the perturbation,
and
\be 
\label{e:Aop} 
\mathcal{A}[\phi] := J\mathcal{L}[\phi]
\ee
is considered as a closed, densely defined linear operator on $L^2(\R)$.   
Motivated by the above considerations, we say that a periodic traveling wave $\phi$ of \eqref{e:ch} is \textit{spectrally unstable} if the $L^2(\R)$-spectrum of $\mathcal{A}[\phi]$ intersects the open right half-plane, i.e., if
$$\sigma_{L^2(\R)}(\mathcal{A}[\phi]) \cap \{\lambda\in\C: {\rm Re}(\lambda)>0\} \ne \emptyset, $$
while it is \textit{spectrally stable} otherwise. This motivates the study of the operator \eqref{e:Aop}.

\begin{remark}\label{r:spec_sym}
We note that since the spectral problem \eqref{e:specprob} is invariant under the transformation $(v,\lambda)\mapsto (\bar{v},\bar{\lambda})$,
as well as the transformation $(\theta,\lambda)\mapsto (-\theta,-\lambda)$, the spectrum of the operator $\mathcal{A}[\phi]$
acting on $L^2(\RM)$ is necessarily symmetric with respect to reflections about the real and imaginary axes.  Consequently, spectral
 stability follows if and only if the $L^2(\RM)$-spectrum of $\mathcal{A}[\phi]$ is purely imaginary.
\end{remark}

As the coefficients of $\mathcal{A}[\phi]$ are $1$-periodic, standard results in Floquet-Bloch theory imply that non-trivial solutions of the 
spectral problem \eqref{e:specprob} cannot be 
square-integrable\footnote{In fact, such solutions can not have finite norm in $L^p(\R)$ for any $1\le p<\infty$.} on $\RM$
and, at best, solutions of \eqref{e:specprob} can be bounded functions on $\RM$: see, for example, \cite{BrHJ16,JP20,RS} and references therein.
It follows that the $L^2(\RM)$ spectrum of $\mathcal{A}[\phi]$ is purely essential with empty point spectrum. 
Further, Floquet-Bloch theory implies that any bounded solution of \eqref{e:specprob} must necessarily be of the form
\begin{equation}\label{e:evec_form}
v(\theta) = e^{i\xi \theta}w(\theta) 
\end{equation}
for some $\xi\in[-\pi,\pi)$ and $w\in L^2_{\rm per}(0,1)$.  
Substituting \eqref{e:evec_form} into \eqref{e:specprob}, it follows that $\lambda\in\CM$ belongs to the $L^2(\RM)$-spectrum of $\mathcal{A}[\phi]$ 
if and only if there exists a $w\in L^2_{\rm per}(0,1)$ and a $\xi\in[-\pi,\pi)$ such that
\be
\label{e:blochprob2}
\lambda w = e^{-i\xi \theta}\mathcal{A}[\phi]e^{i\xi \theta}w =: \mathcal{A}_{\xi}[\phi]w.
\ee 
The one-parameter family of operators $\mathcal{A}_{\xi}[\phi]$ are called the \textit{Bloch operators} associated with $\mathcal{A}[\phi]$, and $\xi$ is referred to as the \textit{Bloch parameter}, or \textit{Bloch frequency}. Since the Bloch operators have compactly embedded domains in $L^2_{\rm per}(0,1)$, for each $\xi\in[-\pi,\pi)$ it follows that that the $L^2_{\rm per}(0,1)$ spectrum of $\mathcal{A}_{\xi}[\phi]$ consists entirely of isolated eigenvalues with finite algebraic multiplicities. In particular, we have the spectral decomposition
\be
\sigma_{L^2(\R)}(\mathcal{A}[\phi]) = \bigcup_{\xi\in[-\pi,\pi)} \sigma_{L^2_{\rm per}(0,1)}(A_{\xi}[\phi]),
\ee
yielding a continuous parameterization of the essential $L^2(\RM)$-spectrum of $\mathcal{A}[\phi]$  by a one-parameter family of $1$-periodic eigenvalue problems for the 
associated Bloch operators $\left\{\mathcal{A}_\xi[\phi]\right\}_{\xi\in[-\pi,\pi)}$.

It follows that to determine the spectral stability of a periodic traveling wave $\phi$ of the CH equation \eqref{e:ch}, one must determine all of the $1$-periodic eigenvalues for each 
Bloch operator $\mathcal{A}_{\xi}[\phi]$ with $\xi\in[-\pi,\pi)$. This is clearly a daunting, if not impossible task in general.  For the purposes of 
modulational stability analysis, however, it is well-known that one need only consider the spectrum of the Bloch operators $\left\{\mathcal{A}_{\xi}[\phi]\right\}_{|\xi|\ll 1}$ in a sufficiently
small neighborhood of the origin in the spectral plane.  In particular, we have the following definition.

\begin{definition}\label{d:MI}
A periodic traveling wave solution $\phi(\cdot;k,M,P)$ of \eqref{e:ch} is said to be modulationally stable if there exists an open neighborhood
$\mathcal{B}\subset\CM$ of the origin $\lambda=0$ and a $\xi_0>0$ such that
\[
\sigma_{L^2_{\rm per}(0,1)}\left(\mathcal{A}_\xi[\phi]\right)\cap\mathcal{B}\subset\RM i
\]
for all $|\xi|<\xi_0$.  The wave $\phi$ is modulationally unstable otherwise.
\end{definition}

In other words, $\phi$ is modulationally stable provided that it is spectrally stable in a sufficiently small neighborhood of the origin.\footnote{In particular, note that while modulational instability
implies spectral instability (in the classical sense), modulational stability does NOT imply spectral stability.} To study the spectrum near the origin,
our strategy is as follows:  We will first study the generalized kernel of the unmodulated Bloch operator $\mathcal{A}_0[\phi]$, showing that it is generically
three-dimensional.  For this, we heavily rely on our assumption that the manifold of periodic traveling wave solutions
can be parameterized by the quantities $(k,M,P)$. 
We then use spectral perturbation theory to track the three eigenvalues $\{\lambda_j(\xi)\}_{j=1}^3$ as they bifurcate 
from the $(\lambda,\xi)=(0,0)$ state.  This is accomplished by constructing asymptotic approximations of the left and right projection operators
onto the total three-dimensional eigenspace associated to the eigenvalues $\lambda_j(\xi)$.  This work will culminate in 
Theorem \ref{t:pert}, encoding the leading order asymptotic expansion of the eigenvalues $\lambda_j(\xi)$ in terms of the eigenvalues
of an explicit matrix $\widehat{\mathcal{D}}_0$.  As such, the eigenvalues of the matrix $\widehat{\mathcal{D}}_0$ defined in \eqref{e:Dmat} determines the 
rigorous modulational instability and stability, as defined in Definition \ref{d:MI} above, of the underlying wave $\phi(\cdot;k,M,P)$.

\begin{remark}\label{r:spec_sym2}
Continuing Remark \ref{r:spec_sym} above, using that $\mathcal{A}[\phi]$ has real-coefficients one can immediately see that
\[
\sigma_{L^2_{\rm per}(0,1)}\left(\mathcal{A}_\xi[\phi]\right)=\overline{\sigma_{L^2_{\rm per}(0,1)}\left(\mathcal{A}_{-\xi}[\phi]\right)}.
\]
Further, since the Bloch operators $\mathcal{A}_\xi[\phi]$ anticommute with the isometry $\mathcal{S}:L^2_{\rm per}(0,1)\to L^2_{\rm per}(0,1)$
given by
\[
\mathcal{S}v(z)=v(-z),
\]
we see that
\[
\sigma_{L^2_{\rm per}(0,1)}\left(\mathcal{A}_{-\xi}[\phi]\right)=-\sigma_{L^2_{\rm per}(0,1)}\left(\mathcal{A}_{\xi}[\phi]\right).
\]
Taken together, it follows that the spectrum of the Bloch operators $\mathcal{A}_\xi[\phi]$ are necessarily symmetric about the imaginary axis.  This observation
will be important later in our rigorous spectral stability analysis.
\end{remark}

\subsection{Analysis of the Unmodulated Operator}\label{s:unmod}

As discussed above, the first step in our  rigorous modulational stability analysis is to understand the $1$-periodic generalized kernel of the 
unmodulated operator $\mathcal{A}_0[\phi]$ defined in \eqref{e:Aop}, as well as the generalized kernel of its adjoint operator\footnote{Information about the kernel of the adjoint is necessary to construct the spectral projections for $\mathcal{A}_0[\phi]$ at $\lambda=0$.} $\mathcal{A}^{\dagger}[\phi]$.  To this end, 
let $\phi=\phi(\cdot;k,M,P)$ be a $1$-periodic solution of the profile equation \eqref{e:profile_rescale}.  Differentiating the profile equation\eqref{e:profile_rescale},
keeping in mind that the traveling wave parameters $(a,E,c)$ are functions of $(k,M,P)$, gives the identities
\begin{equation}\label{e:L_ident}
 \mathcal{L}[\phi]\phi'=0,~~~\mathcal{L}[\phi]\phi_M = E_M -c_M\left(\phi-k^2\phi''\right),~~~
\mathcal{L}[\phi]\phi_P = E_P -c_P\left(\phi-k^2\phi''\right),
\end{equation}
where we recall that $'$ denotes differentiation with respect to $\theta$. Since the frequency $k$ is an independent variable, 
it follows that the functions $\phi'$, $\phi_M$ and $\phi_P$ are all smooth, $1$-periodic functions satisfying
\be
\label{e:AphiM}
  \mathcal{A}[\phi]\phi_M=k(1-k^2\partial_\theta^2)^{-1}\partial_\theta\mathcal{L}[\phi]\phi_M=-kc_M\phi',
\ee
and, similarly,
\be
\label{e:AphiP}
\mathcal{A}[\phi]\phi_P=-kc_P\phi',~~\mathcal{A}[\phi]\phi'=0.
\ee
Additionally, note that
\[
\mathcal{A}^\dag[\phi] = -k\mathcal{L}[\phi]\partial_{\th}(1-k^2\partial_x^2)^{-1}.
\]
Under the assumption that the hypothesis of Proposition \ref{P:kernelL} holds, it follows that
\begin{equation}\label{e:ker_adjoint}
{\rm ker}\left(\mathcal{A}^\dag[\phi]\right)={\rm span}\left\{1,(1-k^2\partial_\theta^2)\phi\right\}.
\end{equation}
Combining the above observations with the Fredholm alternative yields the following characterization of the generalized right and left
kernels for the linearized operator $\mathcal{A}[\phi]$.

\begin{theorem}
\label{t:co-per}
Let $(a,E,c)\in\mathcal{M}$ and let $\phi(\cdot;a,E,c)$ be the corresponding $T=T(a,E,c)$-periodic traveling wave solution of \eqref{e:ch}.  Assume
that the non-degeneracy condition \eqref{e:invert} holds at $\phi$ and, further, assume $T_a\neq 0$ and that
\begin{equation}\label{e:nondeg}
\left(M_c\right)^2+\left(P_c\right)^2\neq 0.
\end{equation}
Then $\lambda=0$ is a $1$-periodic eigenvalue of $\mathcal{A}[\phi]$ with algebraic multiplicity three and geometric multiplicity
two.  In particular, reparameterizing the solution as $\phi(\cdot;k,M,P)$ and defining
\begin{align*}
&\Phi_1^0:=\phi'\qquad \Phi_2^0:=\phi_M\qquad \Phi_3^0:=\phi_P\\
&\Psi_1^0:=\beta\qquad \Psi_2^0:=1\qquad \Psi_3^0:=\phi-k^2\phi''
\end{align*}
where $\beta$ is the unique $1$-periodic odd function satisfying $\mathcal{A}^\dag[\phi]\beta\in{\rm ker}\left(\mathcal{A}^\dag[\phi]\right)$
and $\left<\beta,\Phi_1^0\right>=1$, we have that $\{\Phi_\ell^0\}_{\ell=1}^3$ and $\{\Psi_j^0\}_{j=1}^3$ provide a basis of solutions for
the generalized $2\pi$-periodic kernels of $\mathcal{A}[\phi]$ and $\mathcal{A}^\dag[\phi]$, respectively.  In particular, we 
have $\left<\Psi_j^0,\Phi_\ell^0\right>=\delta_{j\ell}$ and the $\Phi_\ell^0$ and $\Psi_j^0$ satisfy the equations
\[
\mathcal{A}[\phi]\Phi_1^0=0,~~~\mathcal{A}[\phi]\Phi_2^0=-kc_M\Phi_1^0,~~~ \mathcal{A}[\phi]\Phi_3^0=-kc_P\Phi_1^0
\]
and
\[
\mathcal{A}^\dag[\phi]\Psi_2^0=0=\mathcal{A}^\dag[\phi]\Psi_3^0,~~~ \mathcal{A}^\dag[\phi]\Psi_1^0\in{\rm span}\left\{\Psi_2^0,\Psi_3^0\right\}\setminus\{0\}.
\]
\end{theorem}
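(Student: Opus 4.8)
The plan is to establish the three assertions separately: (1) the right kernel and generalized right kernel of $\mathcal{A}[\phi]$, (2) the left kernel and generalized left kernel of $\mathcal{A}^\dagger[\phi]$, and (3) the biorthogonality $\langle \Psi_j^0,\Phi_\ell^0\rangle=\delta_{j\ell}$. For the right side, I would start from the identities \eqref{e:L_ident}, obtained by differentiating the profile equation \eqref{e:profile_rescale} in $k$, $M$, $P$ and using that $\mathcal{A}[\phi]=J\mathcal{L}[\phi]$ with $J=k(1-k^2\partial_\theta^2)^{-1}\partial_\theta$ annihilating constants. This immediately yields \eqref{e:AphiM}--\eqref{e:AphiP}, showing $\Phi_1^0=\phi'$ spans the kernel (this uses Proposition \ref{P:kernelL}, i.e. $T_a\neq 0$, together with the fact that $J$ has one-dimensional kernel spanned by constants and $\mathcal{L}[\phi]$ has range orthogonal to $1$), and that $\Phi_2^0=\phi_M,\Phi_3^0=\phi_P$ are Jordan chain elements above $\phi'$ with $\mathcal{A}[\phi]\Phi_2^0=-kc_M\Phi_1^0$, $\mathcal{A}[\phi]\Phi_3^0=-kc_P\Phi_1^0$. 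I would then argue these three are linearly independent: if $\alpha_1\phi'+\alpha_2\phi_M+\alpha_3\phi_P=0$, differentiating the reparameterization constraint (or using that $(k,M,P)$ are independent coordinates on $\mathcal{M}$ via the non-degeneracy \eqref{e:invert}) forces $\alpha_2=\alpha_3=0$ hence $\alpha_1=0$; and that the generalized kernel is \emph{exactly} three-dimensional because a fourth Jordan chain element would require solving $\mathcal{A}[\phi]v=\Phi_2^0$ or a nontrivial linear combination of $\Phi_2^0,\Phi_3^0$ in $L^2_{\rm per}(0,1)$, which the Fredholm alternative rules out precisely when \eqref{e:nondeg} holds — this is where $(M_c)^2+(P_c)^2\neq 0$ enters, guaranteeing the $2\times 2$ solvability obstruction matrix pairing $\{1,\phi-k^2\phi''\}$ against $\{\phi_M,\phi_P\}$ has the right rank.

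For the left side, I would compute $\mathcal{A}^\dagger[\phi]=\mathcal{L}[\phi]J^\dagger = -k\mathcal{L}[\phi]\partial_\theta(1-k^2\partial_\theta^2)^{-1}$ using self-adjointness of $\mathcal{L}[\phi]$ and skew-adjointness of $J$. Then $v\in\ker\mathcal{A}^\dagger[\phi]$ iff $\partial_\theta(1-k^2\partial_\theta^2)^{-1}v\in\ker\mathcal{L}[\phi]=\Span\{\phi'\}$, i.e. $(1-k^2\partial_\theta^2)^{-1}v$ is a constant plus $\phi$, i.e. $v\in\Span\{1,(1-k^2\partial_\theta^2)\phi\}$ — this is exactly \eqref{e:ker_adjoint}, and note $(1-k^2\partial_\theta^2)\phi=\phi-k^2\phi''$ which is the momentum density, so $\Psi_2^0=1$, $\Psi_3^0=\phi-k^2\phi''$ span the left kernel. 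The generalized left kernel then needs one more element $\Psi_1^0=\beta$ solving $\mathcal{A}^\dagger[\phi]\beta\in\ker\mathcal{A}^\dagger[\phi]\setminus\{0\}$; existence and uniqueness (modulo kernel, fixed by oddness and the normalization $\langle\beta,\phi'\rangle=1$) again follows from the Fredholm alternative applied to $\mathcal{A}^\dagger[\phi]$, whose cokernel is the right kernel $\Span\{\phi'\}$ — one must check $\mathcal{A}^\dagger[\phi]$ restricted to the odd subspace has the needed solvability, using the symmetry structure noted in Remark \ref{r:spec_sym2} (that $\mathcal{A}_0[\phi]$ and $\mathcal{A}_0^\dagger[\phi]$ anticommute with the reflection $\mathcal{S}$, so the even/odd splitting is respected).

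Finally, the biorthogonality $\langle\Psi_j^0,\Phi_\ell^0\rangle=\delta_{j\ell}$: several entries are forced by parity — $\phi'$ is odd and $1$, $\phi-k^2\phi''$ are even, so $\langle\Psi_2^0,\Phi_1^0\rangle=\langle 1,\phi'\rangle=0$ and $\langle\Psi_3^0,\Phi_1^0\rangle=\langle\phi-k^2\phi'',\phi'\rangle=\frac12\langle 1,((\phi)^2-k^2(\phi')^2)'\rangle=0$ by periodicity, while $\langle\Psi_1^0,\Phi_1^0\rangle=\langle\beta,\phi'\rangle=1$ is the normalization. For the remaining entries I would differentiate the definitions $M=M(k,M,P)$, $P=P(k,M,P)$ of the conserved quantities restricted to the wave: $\langle 1,\phi_M\rangle=\partial_M\int_0^1\phi\,d\theta=1$ gives $\langle\Psi_2^0,\Phi_2^0\rangle=1$, and $\langle 1,\phi_P\rangle=\partial_P M=0$ gives $\langle\Psi_2^0,\Phi_3^0\rangle=0$; similarly $\langle\phi-k^2\phi'',\phi_P\rangle$ and $\langle\phi-k^2\phi'',\phi_M\rangle$ are $\partial_P P=1$ and $\partial_M P=0$ after an integration by parts identifying $\langle\phi-k^2\phi'',\phi_Q\rangle=\langle\phi,\phi_Q\rangle+k^2\langle\phi',\phi_Q'\rangle=\partial_Q\big(\tfrac12\int_0^1(\phi^2+k^2(\phi')^2)\,d\theta\big)=\partial_Q P$. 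The entries $\langle\beta,\phi_M\rangle$ and $\langle\beta,\phi_P\rangle$ must then be shown to vanish; I expect this to follow from the Jordan-chain relations $\mathcal{A}[\phi]\Phi_{2,3}^0=-kc_{M,P}\Phi_1^0$ together with $\mathcal{A}^\dagger[\phi]\beta\in\Span\{1,\phi-k^2\phi''\}$ and the already-established orthogonalities, after possibly redefining $\beta$ by adding a multiple of $\phi'$ (which is why $\beta$ is pinned down only up to $\ker\mathcal{A}^\dagger$ and the normalization) — this last bookkeeping step, reconciling the normalization of $\beta$ with all four orthogonality constraints simultaneously, is the part I expect to be most delicate, since it requires that the freedom in choosing $\beta$ (one-dimensional, along $\phi'$) suffices to kill the two pairings $\langle\beta,\phi_M\rangle,\langle\beta,\phi_P\rangle$, which in turn relies on a compatibility already built into the structure of the generalized kernel rather than on an extra hypothesis.
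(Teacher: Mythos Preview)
Your overall strategy matches the paper's, but there is one genuine conceptual error that propagates through several parts of the argument: you assert that $\Phi_1^0=\phi'$ alone spans $\ker\mathcal{A}[\phi]$. This is false, and it contradicts the theorem statement itself, which says the geometric multiplicity at $\lambda=0$ is \emph{two}. Since $\mathcal{A}[\phi]=J\mathcal{L}[\phi]$ and $\ker J=\{\text{constants}\}$, the kernel of $\mathcal{A}[\phi]$ consists of all $v$ with $\mathcal{L}[\phi]v$ constant, not just $\mathcal{L}[\phi]v=0$. From \eqref{e:L_ident} one sees $\mathcal{L}[\phi](c_M\phi_P-c_P\phi_M)=c_ME_P-c_PE_M\in\mathbb{R}$, so $c_M\phi_P-c_P\phi_M$ is a second kernel element independent of $\phi'$ (one is even, the other odd). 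Thus $\ker\mathcal{A}[\phi]=\Span\{\phi',\,c_M\phi_P-c_P\phi_M\}$ is two-dimensional. Your parenthetical justification (``$\mathcal{L}[\phi]$ has range orthogonal to $1$'') is also wrong: $\mathcal{L}[\phi]$ is self-adjoint with kernel $\Span\{\phi'\}$, so its range is $\{\phi'\}^\perp$, which certainly contains constants. This error recurs when you describe $\phi_M,\phi_P$ as both being ``Jordan chain elements above $\phi'$'' (only one independent combination is) and when you say the cokernel of $\mathcal{A}^\dagger[\phi]$ is $\Span\{\phi'\}$ (it is $\ker\mathcal{A}[\phi]$, hence two-dimensional). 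The Fredholm bookkeeping for both the non-existence of a fourth generalized eigenvector and the existence of $\beta$ must be redone with the correct two-dimensional kernel; this is where the hypothesis $(c_M,c_P)\neq(0,0)$ actually bites.

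Separately, you over-complicate the pairings $\langle\beta,\phi_M\rangle$ and $\langle\beta,\phi_P\rangle$. These vanish immediately by parity: $\beta$ is odd by construction and $\phi_M,\phi_P$ are even (derivatives in parameters preserve the evenness of $\phi$). There is no delicate compatibility to check, and no need to adjust $\beta$ along $\ker\mathcal{A}^\dagger$ to achieve these orthogonalities.
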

%

\begin{proof}
By hypothesis, we know that the kernel of $\mathcal{L}[\phi]$ defined in \eqref{e:Lop} is precisely $\phi'$ and, furthermore, that the kernel 
of the adjoint operator $\mathcal{A}^\dag[\phi]$ is given as in \eqref{e:ker_adjoint}.  Using the identities in \eqref{e:L_ident} it follows that
\[
\ker\left(\mathcal{A}[\phi]\right)={\rm span}\left\{\phi',c_M\phi_P-c_P\phi_M\right\}.
\]
To study the generalized kernel of $\mathcal{A}[\phi]$, we note by the Fredholm alternative that the equation
\[
\mathcal{A}[\phi]\psi=c_M\phi_P-c_P\phi_M
\]
has no $1$-periodic solution $\psi$ provided that either $c_M\neq 0$ or $c_P\neq 0$, i.e. provided that \eqref{e:nondeg} holds.  Indeed, note that
\[
\left<1,c_M\phi_P-c_P\phi_M\right>=c_MM_P-c_PM_M=-c_P
\]
and, similarly
\[
\left<(1-k^2\partial_\theta^2)\phi ,c_M\phi_P-c_P\phi_M\right>=c_MP_P-c_PM_P=c_M.
\]
Here, we are using that the $M$ and $P$ are independent variables in our parameterization of $\phi(\cdot;k,M,P)$.

Further, from \eqref{e:L_ident} we know that 
\[
\mathcal{A}[\phi]\Phi_2^0=-kc_M\phi'.
\]
Noting that
\[
\left<1,\Phi_2\right>=1
\]
by considerations as above, it follows that
\[
\ker\left(\mathcal{A}[\phi]^3\right)\setminus\ker\left(\mathcal{A}[\phi]^2\right)=\emptyset.
\]
Taken together, this establishes that $\lambda=0$ is indeed a $1$-periodic eigenvalue of $\mathcal{A}[\phi]$ of algebraic multiplicity three and geometric multiplicity
two.  The existence of a function $\beta$ satisfying 
\[
\mathcal{A}^\dag[\phi]\beta\in{\rm span}\left\{\Psi_2^0,\Psi_3^0\right\}\setminus\{0\}
\]
similarly follows using the Fredholm Alternative.
\end{proof}

Throughout the remainder of our work, we will assume the hypotheses of Theorem \ref{t:co-per} hold.
%

\subsection{Spectral Perturbation Theory}

Now that we have constructed a basis for the generalized kernels of $\mathcal{A}[\phi]$, and its adjoint $\mathcal{A}^{\dagger}[\phi]$ in coordinates compatible with the 
Whitham modulation system \eqref{e:Wsys}, we now consider the spectrum of the Bloch operators for $|(\lambda,\xi)|\ll 1$.  To this end, we observe
that the Bloch operators in \eqref{e:blochprob2} can be decomposed as
\be
\mathcal{A}_{\xi}[\phi] = J_{\xi}\mathcal{L}_{\xi}[\phi],
\ee
where here $J_{\xi}:=e^{-i\xi\theta}Je^{i\xi\theta}$ and  $\mathcal{L}_{\xi}[\phi]:=e^{-i\xi\theta}\mathcal{L}[\phi]e^{i\xi\theta}$  are the Bloch operators
associated with the operators $J$ and $\mathcal{L}[\phi]$ in \eqref{e:Jop} and \eqref{e:Lop}, respectively.  Recall all the Bloch operators
are considered to be densely defined on $L^2_{\rm per}(0,1)$.  Our goal here is to study the  $1$-periodic eigenvalue problem
\[
J_\xi\mathcal{L}_\xi[\phi]v=\lambda v
\]
for $|(\lambda,\xi)|\ll 1$.  

To this end, we begin by expanding the Bloch operators for $|\xi|\ll 1$.  Since the Bloch operators are analytic in $\xi$ it is easy to verify that
\begin{align*}
\mathcal{L}_\xi[\phi]&= -k^2(\partial_\theta+i\xi)\left(c-\phi\right)(\partial_\theta+i\xi)+\left(c-3\phi+k^2\phi''\right)\\
&= L_0+(ik\xi)L_1+(ik\xi)^2L_2,
\end{align*}
where
$$ L_0=\mathcal{L}[\phi],~~~L_1=-k\partial_\theta\left((c-\phi)\cdot\right)-k(c-\phi)\partial_\theta,~~~L_2=-(c-\phi),
$$
are operators acting on $L^2_{\rm per}(0,1)$.  Regarding the Bloch expansion for $J_{\xi}$, first note that
$$ 1-k^2(\partial_\theta+i\xi)^2 = \mathcal{G}_0 + (ik\xi)\mathcal{G}_1 + (ik\xi)^2\mathcal{G}_2, $$
where
$$ \mathcal{G}_0=1-k^2\partial_\theta^2,~~~\mathcal{G}_1=-2k\partial_\theta,~~~\mathcal{G}_2=-1, $$
are operators acting on $L^2_{\rm per}(0,1)$.  Since $\mathcal{G}_0$ is invertible in $L^2_{\rm per}(0,1)$, we can rewrite the above expansion as
$$ 1-k^2(\partial_\theta+i\xi)^2 = \left(I+(ik\xi)\mathcal{G}_1\mathcal{G}_0^{-1}+(ik\xi)^2\mathcal{G}_2\mathcal{G}_0^{-1}\right)\mathcal{G}_0, $$
and hence the (nonlocal) operator $(1-k^2(\d_\theta+i\xi)^2)^{-1}$ can be expanded via a Neuman series as 
\begin{align*}
\left(1-k^2(\partial_\theta+i\xi)^2\right)^{-1}&=\mathcal{G}_0^{-1}\sum_{j=0}^\infty (-1)^j\left[\left((ik\xi)\mathcal{G}_1 + (ik\xi)^2\mathcal{G}_2\right)\mathcal{G}_0^{-1}\right]^j\\
&=\mathcal{G}_0^{-1}-\mathcal{G}_0^{-1}\left((ik\xi)\mathcal{G}_1\mathcal{G}_0^{-1}+(ik\xi)^2\mathcal{G}_2\mathcal{G}_0^{-1}\right)\\
&\quad+\mathcal{G}_0^{-1}\left((ik\xi)^2\mathcal{G}_1\mathcal{G}_0^{-1}\mathcal{G}_1\mathcal{G}_0^{-1}\right)+\mathcal{O}(\xi^3)\\
&=G_0+(ik\xi)G_1+(ik\xi)^2G_2+\mathcal{O}(\xi^3),
\end{align*}
where
\begin{align*}
G_0=(1-k^2\partial_\theta^2)^{-1},~~~G_1=-\mathcal{G}_0^{-1}\mathcal{G}_1\mathcal{G}_0^{-1}=2k(1-k^2\partial_\theta^2)^{-2}\partial_\theta,
\end{align*}
and
\begin{align*}
G_2&=-\mathcal{G}_0^{-1}\mathcal{G}_2\mathcal{G}_0^{-1}+\mathcal{G}_0^{-1}\mathcal{G}_1\mathcal{G}_0^{-1}\mathcal{G}_1\mathcal{G}_0^{-1}\\
&=(1-k^2\partial_\theta^2)^{-2}+4k^2(1-k^2\partial_\theta^2)^{-3}\partial_\theta^2.
\end{align*}
Taken together, for $|\xi|\ll 1$ we have the expansion
\begin{align*}
J_\xi &= k\left(1-k^2(\partial_\theta+i\xi)^2\right)^{-1}\left(\partial_\theta+i\xi\right)\\
&=k\left(G_0+(ik\xi)G_1+(ik\xi)^2G_2+\mathcal{O}(\xi^3)\right)\left(\partial_\theta+i\xi\right)\\
&=J_0+(ik\xi)J_1+(ik\xi)^2J_2+\mathcal{O}(\xi)^3,
\end{align*}
where
\begin{equation}\label{e:J_expand}
\left\{\begin{aligned}
J_0&=kG_0\partial_\theta = k\left(1-k^2\partial_\theta^2\right)^{-1}\partial_\theta,\\
J_1&=kG_1\partial_\theta+G_0=2k^2(1-k^2\partial_\theta^2)^{-2}\partial_\theta^2+(1-k^2\partial_\theta^2)^{-1},\\
J_2&=kG_2\partial_\theta + G_1=k(1-k^2\partial_\theta^2)^{-2}\partial_\theta + 4k^3(1-k^2\partial_\theta^2)^{-3}\partial_\theta^3+2k(1-k^2\partial_\theta^2)^{-2}\partial_\theta.
\end{aligned}\right.
\end{equation}
Finally, it follows that for $|\xi|\ll 1$ we have the analytic expansions
\begin{align*}
\mathcal{A}_\xi[\phi]&= J_\xi\mathcal{L}_\xi[\phi]\\
&=\left(J_0+(ik\xi)J_1+(ik\xi)^2J_2\right)\left(L_0+(ik\xi)L_1+(ik\xi)^2L_2\right)+\mathcal{O}(\xi^3)\\
&=A_0+(ik\xi)A_1+(ik\xi)^2A_2+\mathcal{O}(\xi^3)
\end{align*}
where
\be
\label{e:Aidentities}
A_0=J_0L_0,~~A_1=J_0L_1+J_1L_0,~~A_2=J_0L_2+J_1L_1+J_2L_0.
\ee 

With the above analytic expansions in hand, we recall by Theorem \ref{t:co-per} that $\lambda=0$ is a $1$-periodic 
eigenvalue of $\mathcal{A}_0[\phi]$ with algebraic multiplicity three and geometric multiplicity two.  
Noting that $\mathcal{A}_\xi[\phi]$ is a relatively compact perturbation of $\mathcal{A}_0[\phi]$ for all $\xi\in[-\pi,\pi)$,
it follows by spectral perturbation theory that the operator $\mathcal{A}_\xi[\phi]$ will have 
three eigenvalues $\{\lambda_j(\xi)\}_{j=1}^3$ near $\lambda=0$ for $|\xi|\ll 1$.  These eigenvalues necessarily
bifurcate from the $\lambda_j(0)=0$ state, and the modulational stability or instability of the underlying wave $\phi$
is thus determined by tracking these three eigenvalues for $|\xi|\ll 1$ and determining if they remain confined to the 
imaginary axis or not.   This will be accomplished by projecting the infinite dimensional eigenvalue problem
\[
\mathcal{A}_\xi[\phi]v=\lambda v,
\]
for $|\xi|\ll 1$ onto the total three-dimensional eigenspace associated with the eigenvalues $\{\lambda_j(\xi)\}_{j=1}^3$ bifurcating
from the $(\lambda,\xi)$ state.

To this end, we note that the standard spectral perturbation theory  (see, for example, Theorems 1.7 and 1.8 in \cite[Chapter VII.1.3]{K76}) guarantees that the dual bases 
in Theorem \ref{t:co-per} extend analytically for $|\xi|\ll 1$ into dual right and left
bases $\left\{\Phi^\xi_\ell\right\}_{\ell=1}^3$ and $\left\{\Psi^\xi_j\right\}_{j=1}^3$ associated to the three eigenvalues $\{\lambda_j(\xi)\}_{j=1}^3$
near the origin and, further, these dual bases satisfy $\langle\Psi_j^\xi,\Phi_\ell^\xi\rangle=\delta_{j\ell}$ for all $|\xi|\ll 1$. 
Given the existence of these bases, for $|\xi|\ll 1$ we can construct $\xi$-dependent rank three eigenprojections, namely, 
\begin{equation}\label{e:proj}
\Pi(\xi):L^2_{\rm per}(0,1)\to\bigoplus_{j=1}^3\ker\Big(\mathcal{A}_\xi[\phi]-\lambda_j(\xi)I\Big),~~
\widetilde{\Pi}(\xi):L^2_{\rm per}(0,1)\to\bigoplus_{j=1}^3\ker\Big(\mathcal{A}^\dag_\xi[\phi]-\overline{\lambda_j(\xi)}I\Big),
\end{equation}
whose ranges are precisely the total right and left eigenspaces associated with the eigenvalues $\{\lambda_j(\xi)\}_{j=1}^3$
near the origin.  In particular, we note that the small eigenvalues $\lambda_j(\xi)$ are given precisely by the 
roots of the characteristic polynomial
\[
\det\left(\mathcal{D}_\xi-\lambda I\right)=0
\]
associated with the $3\times 3$ matrix operator
\be
\label{e:matrix_def}
\mathcal{D}_\xi:=\widetilde{\Pi}(\xi)\mathcal{A}_\xi[\phi]\Pi(\xi)=\left[\left<\Psi_j^\xi,\mathcal{A}_\xi[\phi]\Phi_\ell^\xi\right>\right]_{j,\ell=1}^3,
\ee
defined for $|\xi|\ll 1$, where here $I$ denotes the $3\times 3$ identity matrix.\footnote{Here, we are using that $\widetilde{\Pi}(\xi)\Pi(\xi)=I$, by construction.}
It remains to calculate an expansion of the matrix $\mathcal{D}_\xi$ to sufficiently high order.

To this end, we begin by expanding the right and left dual bases as
\begin{align*}
\Phi_\ell^\xi&=\Phi_\ell^0 + (ik\xi)\left(\frac{1}{ik}\partial_\xi\Phi_\ell^\xi\Big{|}_{\xi=0}\right)+(ik\xi)^2\left(\frac{1}{(ik)^2}\partial_\xi^2\Phi_\ell^\xi\Big{|}_{\xi=0}\right)+\mathcal{O}(\xi^3),\\
\Psi_j^\xi&=\Psi_j^0 + (ik\xi)\left(\frac{1}{ik}\partial_\xi\Psi_j^\xi\Big{|}_{\xi=0}\right)+(ik\xi)^2\left(\frac{1}{(ik)^2}\partial_\xi^2\Psi_j^\xi\Big{|}_{\xi=0}\right)+\mathcal{O}(\xi^3),
\end{align*}
thus yielding the asymptotic expansion
\[
\mathcal{D}_\xi = D_0 + (ik\xi)D_1 + (ik\xi)^2 D_2 + \mathcal{O}(\xi^3)
\]
for $|\xi|\ll 1$.  In what follows, we seek to determine the leading order asymptotics for each entry of the $3\times 3$ matrix $\mathcal{D}_\xi$ in the 
limit as $\xi\to 0$.

To begin, note that Theorem \ref{t:co-per} immediately implies that
\be
\label{e:D0}
D_0=\left(\begin{array}{c|cc}
								0 & -kc_M & -kc_P\\
								\hline 0 & 0 & 0\\
								0 & 0 & 0
								\end{array}\right),
\ee
where here the horizontal and vertical lines are included simply as a visual aide to organize the $3\times 3$ matrix into appropriate sub-blocks.
Further, an elementary calculation shows that
\begin{equation}\label{e:D1_pre}
D_1=\left[\left<\Psi_j^0,A_0\frac{1}{ik}\partial_\xi\Phi_\ell^\xi\Big{|}_{\xi=0}+A_1\Phi_\ell^0\right>+\left<\frac{1}{ik}\partial_\xi\Psi_j^\xi\Big{|}_{\xi=0},A_0\Phi_\ell^0\right>\right]_{j,\ell=1}^3.
\end{equation}
In order to more explicitly calculate the relevant entries of $D_1$, we make two important observations.  First, we note that the normalization of the right and left bases imply
that
\[
0=\partial_{\xi}\left<\Psi_j^\xi,\Phi_\ell^\xi\right>\Big{|}_{\xi=0}
\]
and hence
\begin{equation}\label{mv_der}
\left<\frac{1}{ik}\partial_\xi\Psi_j^\xi\Big{|}_{\xi=0},\Phi_\ell^0\right>=-\left<\Psi_j^0,\frac{1}{ik}\partial_\xi\Phi_\ell^\xi\Big{|}_{\xi=0}\right>.
\end{equation}
Secondly, we note that since $\Phi_1^\xi$ is in the total right eigenspace of $\mathcal{A}_\xi[\phi]$ 
associated with the eigenvalues $\{\lambda_j(\xi)\}_{j=1}^3$ near the origin, it follows that 
$\mathcal{A}_\xi[\phi]\Phi_1^\xi$ is invariant with respect to the projection $\Pi(\xi)$ defined in \eqref{e:proj}.
Specifically, we have
\[
\Pi(\xi)\mathcal{A}_\xi[\phi]\Phi_1^\xi = \mathcal{A}_\xi[\phi]\Phi_1^\xi
\]
for all $|\xi|\ll 1$, and hence differentiating with respect to $\xi$ and evaluating at $\xi=0$ yields
\[
\frac{1}{ik}\partial_\xi\Pi(\xi)\Big{|}_{\xi=0}A_0\Phi_1^0 + \Pi(0)\left(A_1\Phi_1^0 + A_0\left(\frac{1}{ik}\partial_\xi\Phi_1^\xi\Big{|}_{\xi=0}\right)\right)
=A_1\Phi_1^0 + A_0\left(\frac{1}{ik}\partial_\xi\Phi_1^\xi\Big{|}_{\xi=0}\right).
\]
Noting that $A_0\Phi_1^0=0$ by Theorem \ref{t:co-per} and also that differentiating the profile equation \eqref{e:profile_rescale} with respect to $k$ gives 
\begin{equation}\label{e:Aphik}
A_0\phi_k = \mathcal{A}[\phi]\phi_k = -kc_k\phi'-A_1\phi',
\end{equation}
it follows that the above can be rewritten as
\[
\Pi(0)\left(A_0\left(\frac{1}{ik}\partial_\xi\Phi_1^\xi\Big{|}_{\xi=0}-\phi_k\right)\right)=A_0\left(\frac{1}{ik}\partial_\xi\Phi_1^\xi\Big{|}_{\xi=0}-\phi_k\right).
\]
and hence that the function
\[
\frac{1}{ik}\partial_\xi\Phi_1^\xi\Big{|}_{\xi=0}-\phi_k
\]
necessarily lies in the generalized eigenspace for $A_0$.  By Theorem \ref{t:co-per} it follows that 
\[
\frac{1}{ik}\partial_\xi\Phi_1^\xi\Big{|}_{\xi=0}=\phi_k+\sum_{j=1}^3 a_j\Phi_j^0
\]
for some constants $a_j\in\CM$.  In particular, by replacing the function $\Phi_1^\xi$ with the function
\begin{equation}\label{e:mod_basis1}
\widetilde{\Phi}_1^\xi = \Phi_1^\xi - (ik\xi)\sum_{j=1}^3 a_j\Phi_j^0,
\end{equation}
while simultaneously replacing the left bases $\{\Psi_j^\xi\}_{j=1}^3$ with the functions
\[
\widetilde{\Psi}_j^\xi = \Psi_j^\xi + (ik\xi)a_j\Psi_1^\xi,~~j=1,2,3,
\]
it immediately follows that
\[
\widetilde{\Phi}_1^\xi = \Phi_1^0 + (ik\xi)\phi_k  + \mathcal{O}(\xi^2),~~~I_\xi:=\widetilde{\Pi}(\xi)\Pi(\xi) = I+\mathcal{O}(\xi^2).
\]
Consequently, we see that up to a harmless modification of the right and left basis functions, we can identify that first variation of $\Phi_1^\xi$ in $\xi$ at $\xi=0$ 
as precisely $\phi_k$, while still retaining the biorthonormality of the basis up to $\mathcal{O}(\xi^2)$.  
As we will see, this order of preserving biorthonomality is sufficient for our calculations. 
Further, the  identity \eqref{mv_der} still holds as well as it only involves first order information in $\xi$.

%

With the above preparations, we can now calculate the relevant entries of the matrix $D_1$.  First, note  from \eqref{e:D1_pre} that the  $(1,1)$ entry is given by
\begin{align*}
\left<\Psi_1^0,A_0\frac{1}{ik}\partial_\xi\Phi_1^\xi\Big{|}_{\xi=0}+A_1\Phi_1^0\right>+\left<\frac{1}{ik}\partial_\xi\Psi_1^\xi\Big{|}_{\xi=0},A_0\Phi_1^0\right>.
\end{align*}
Using \eqref{e:mod_basis1} as well as the identity \eqref{e:Aphik} we have
\begin{align*}
\left<\Psi_1^0,A_0\frac{1}{ik}\partial_\xi\Phi_1^\xi\Big{|}_{\xi=0}\right>&=\left<\Psi_1^0,-kc_k\phi'-A_1\phi'\right>\\
&=-kc_k\left<\Psi_1^0,\Phi_1^0\right>-\left<\Psi_1^0,A_1\Phi_1^0\right>
\end{align*}
and hence, using that $A_0\Phi_1^0=0$, it follows that
\begin{align*}
\left<\Psi_1^0,A_0\frac{1}{ik}\partial_\xi\Phi_1^\xi\Big{|}_{\xi=0}+A_1\Phi_1^0\right>+\left<\frac{1}{ik}\partial_\xi\Psi_1^\xi\Big{|}_{\xi=0},A_0\Phi_1^0\right>&=-kc_k.
\end{align*}
Similarly, for $j=2,3$ we have
\begin{align*}
&\left<\Psi_j^0,A_0\frac{1}{ik}\partial_\xi\Phi_1^\xi\Big{|}_{\xi=0}+A_1\Phi_1^0\right>+\left<\frac{1}{ik}\partial_\xi\Psi_j^\xi\Big{|}_{\xi=0},A_0\Phi_1^0\right>\\
&\qquad=\left<A_0^\dag\Psi_j^0,\frac{1}{ik}\partial_\xi\Phi_1^\xi\Big{|}_{\xi=0}\right>+\left<\Psi_1^0,A_1\Phi_1^0\right>=0,
\end{align*}
where the first term vanishes since $A_0^\dag\Psi_j^0=0$ for $j=2,3$ and the second term vanishes by parity.\footnote{Note that $\Phi_1^0$ is odd, $A_1$ preserves
parity, and $\Psi_j^0$ is even for $j=2,3$.}

Continuing, note that for $\ell=2,3$ we have
\begin{align*}
&\left<\Psi_2^0,A_0\frac{1}{ik}\partial_\xi\Phi_\ell^\xi\Big{|}_{\xi=0}+A_1\Phi_\ell^0\right>+\left<\frac{1}{ik}\partial_\xi\Psi_2^\xi\Big{|}_{\xi=0},A_0\Phi_\ell^0\right>=
\left<\Psi_2^0,A_1\Phi_\ell^0\right>+\left<\frac{1}{ik}\partial_\xi\Psi_2^\xi\Big{|}_{\xi=0},A_0\Phi_\ell^0\right>
\end{align*}
where we have used that $A_0^\dag\Psi_2^0=0$.
For the second term above, note that if $\ell=2$ we have
\begin{align*}
\left<\frac{1}{ik}\partial_\xi\Psi_2^\xi\Big{|}_{\xi=0},A_0\Phi_2^0\right>&=-kc_M\left<\frac{1}{ik}\partial_\xi\Psi_2^\xi\Big{|}_{\xi=0},\Phi_1^0\right>\\
&=kc_M\left<\Psi_2^0,\frac{1}{ik}\partial_\xi\Phi_1^\xi\Big{|}_{\xi=0}\right>\\
&=kc_M\left<\Psi_2^0,\phi_k\right>\\
&=kc_M\left<1,\phi_k\right>,
\end{align*}
where the second equality follows by \eqref{mv_der}.   Noting that $M$ is an independent variable with respect to $k$, we have
\[
\left<1,\phi_k\right>=M_k=0
\]
and hence 
\[
\left<\frac{1}{ik}\partial_\xi\Psi_2^\xi\Big{|}_{\xi=0},A_0\Phi_2^0\right>=0.
\]
Similarly, for $\ell=3$ we have
\[
\left<\frac{1}{ik}\partial_\xi\Psi_2^\xi\Big{|}_{\xi=0},A_0\Phi_3^0\right>=kc_P\left<1,\phi_k\right>=kc_P M_k=0
\]
and hence for $\ell=2,3$ we have
\begin{align*}
&\left<\Psi_2^0,A_0\frac{1}{ik}\partial_\xi\Phi_\ell^\xi\Big{|}_{\xi=0}+A_1\Phi_\ell^0\right>+\left<\frac{1}{ik}\partial_\xi\Psi_2^\xi\Big{|}_{\xi=0},A_0\Phi_\ell^0\right>=
\left<\Psi_2^0,A_1\Phi_\ell^0\right>.
\end{align*}

To complete our computation of $D_1$, we follow the above strategy and note that for $\ell=2,3$ we have
\begin{align*}
\left<\Psi_3^0,A_0\frac{1}{ik}\partial_\xi\Phi_\ell^\xi\Big{|}_{\xi=0}+A_1\Phi_\ell^0\right>+\left<\frac{1}{ik}\partial_\xi\Psi_3^\xi\Big{|}_{\xi=0},A_0\Phi_\ell^0\right>=
\left<\Psi_3^0,A_1\Phi_\ell^0\right>+\left<\frac{1}{ik}\partial_\xi\Psi_3^\xi\Big{|}_{\xi=0},A_0\Phi_\ell^0\right>,
\end{align*}
where we have used $A_0^\dag\Psi_3^0=0$.  
As above, for the second term above we note that when $\ell=2$ we have
\begin{align*}
\left<\frac{1}{ik}\partial_\xi\Psi_3^\xi\Big{|}_{\xi=0},A_0\Phi_2^0\right>&=-kc_M\left<\frac{1}{ik}\partial_\xi\Psi_3^\xi\Big{|}_{\xi=0},\Phi_1^0\right>\\
&=kc_M\left<\Psi_3^0,\phi_k\right>\\
&=kc_M\left<\phi-k^2\phi'',\phi_k\right>.
\end{align*}
Noting that $P$ is an independent variable with respect to $k$, a direct calculation using the product  rule shows that
\begin{align*}
\left<\phi-k^2\phi'',\phi_k\right>&=\int_0^1\phi\phi_k+k^2\phi'\phi'_k~dx\\
&=P_k-k\left<\phi',\phi'\right> \\
&=-k\left<\phi',\phi'\right>
\end{align*}
and hence
\[
\left<\frac{1}{ik}\partial_\xi\Psi_3^\xi\Big{|}_{\xi=0},A_0\Phi_2^0\right>=-k^2c_M\left<\phi',\phi'\right>.
\]
Similarly, for $\ell=3$ we have
\[
\left<\frac{1}{ik}\partial_\xi\Psi_3^\xi\Big{|}_{\xi=0},A_0\Phi_2^0\right>=-k^2c_P\left<\phi',\phi'\right>.
\]
Putting everything together, it follows that
\be
\label{e:D1}
D_1=\left(\begin{array}{c|cc}
			-kc_k & * & *\\
\hline 	0 & \left<\Psi_2^0,A_1\Phi_2^0\right> & 	\left<\Psi_2^0,A_1\Phi_3^0\right>\\
			0 & \left<\Psi_3^0,A_1\Phi_2^0\right>-k^2c_M\left<\phi',\phi'\right> &  \left<\Psi_3^0,A_1\Phi_3^0\right>-k^2c_P\left<\phi',\phi'\right> 
			\end{array}\right)
\ee 
where here $*$ represents un-computed entries that do not contribute to the leading order asymptotics\footnote{Recall we only seek here the leading
order (non-zero) asymptotics of the entries of the matrix $\mathcal{D}_\xi$ in the limit as $\xi\to 0$, and the entries with the $*$ were already seen in \eqref{e:D0}
to have $\mathcal{O}(1)$ contributions.} of $\mathcal{D}_\xi$.  

Taking \eqref{e:D0} and \eqref{e:D1} together, it remains to calculate two entries of the matrix $D_2$.  By a direct calculation,
we find that
\begin{align*}
D_2&=\left[\left<\Psi_j^0,A_0\frac{1}{(ik)^2}\partial_\xi^2\Phi_\ell^\xi\Big{|}_{\xi=0} + A_1\frac{1}{ik}\partial_\xi\Phi_\ell^\xi\Big{|}_{\xi=0}+A_2\Phi_\ell^0\right>\right]_{j,\ell=1}^3\\
&\quad+\left[\left<\frac{1}{ik}\partial_\xi\Psi_j^\xi\Big{|}_{\xi=0},A_0\frac{1}{ik}\partial_\xi\Phi_\ell^\xi\Big{|}_{\xi=0}+A_1\Phi_\ell^0\right>
+\left<\frac{1}{(ik)^2}\partial_\xi^2\Psi_j^\xi\Big{|}_{\xi=0},A_0\Phi_\ell^0\right>\right]_{j,\ell=1}^3.
\end{align*}
Based on the structure of $D_0$ and $D_1$, we need to compute the above when $j=1$ and  $\ell=2,3$.  For these cases, following the same strategies
as above we find that
the first term is zero (moving the adjoint of $A_0$ over), the second term simplifies to  $\left<\Psi_j^0,A_1\phi_k\right>$, and the third
term just doesn't simplify.  Similarly, the fourth term becomes
\[
\left<\frac{1}{ik}\partial_\xi\Psi_j^\xi\Big{|}_{\xi=0},A_0\phi_k\right>,
\]
while the fifth term is
\[
\left<\frac{1}{ik}\partial_\xi\Psi_j^\xi\Big{|}_{\xi=0},-kc_k\Phi_1^0-A_0\phi_k\right>=kc_k\left<\Psi_j^0,\phi_k\right>-\left<\frac{1}{ik}\partial_\xi\Psi_j^\xi\Big{|}_{\xi=0},A_0\phi_k\right>.
\]
Finally, the sixth term is clearly zero. It follows that the relevant entries of $D_2$ are given by
\be
\label{e:D2} 
D_2=\left(\begin{array}{c|cc}
				*  & * & *\\
\hline			\left<\Psi_2^0,A_1\phi_k+A_2\Phi_1^0\right> & * & *\\
				\left<\Psi_3^0,A_1\phi_k+A_2\Phi_1^0\right> + kc_k\left<\Psi_3^0,\phi_k\right> & * & *
				\end{array}\right)
\ee			

Putting \eqref{e:D0}, \eqref{e:D1} and \eqref{e:D2} together, it follows that the matrix $\mathcal{D}_\xi$ in \eqref{e:matrix_def} expands analytically for $|\xi|\ll 1$ 
with asymptotic orders given by
\[
\mathcal{D}_\xi=\left(\begin{array}{c|ccc}
				\mathcal{O}(\xi) &  & \mathcal{O}(1) & \\
\hline			  & & &\\
				\mathcal{O}(\xi^2) &  & \mathcal{O}(\xi) & \\
				  & & & 
				  \end{array}\right),
\]	
where the upper-left block is $1\times 1$. By standard spectral perturbation theory, it follows that the eigenvalues $\lambda_j(\xi)$ of $\mathcal{D}_\xi$ are
at least $C^1$ in $\xi$ for $|\xi|\ll 1$, and satisfy $\lambda_j(0)=0$. As such, the eigenvalues $\lambda_j$ can be factored as
\be
\label{e:ev_factor}
\lambda_j(\xi)=ik\xi\mu_j(\xi)
\ee
for some continuous functions $\mu_j$ defined for $|\xi|\ll 1$.  Further, we note that by defining the invertible matrix
\be
\label{e:Smat}
S(\xi):=\left(\begin{array}{c|cc}
				ik\xi & 0 & 0\\
\hline			0 & 1 & 0\\
				0 & 0 & 1
				\end{array}\right) 				
\ee
and setting
\[
\widehat{\mathcal{D}}_\xi:=\frac{1}{ik\xi}S(\xi)\mathcal{D}_\xi S(\xi)^{-1},~~\widehat{I}_\xi:=S(\xi)I_\xi S(\xi)^{-1}
\]
it follows that both $\widehat{\mathcal{D}}_\xi$ and $\widehat{I}_\xi$ are analytic in $ik\xi$ for $|\xi|\ll 1$ and satisfy $\widehat{I}_0=I$ and
\be
\label{e:Dmat} 
\widehat{\mathcal{D}}_0=
				\left(\begin{array}{c|cc}
				-kc_k & -kc_M & -kc_P\\
\hline			\left<\Psi_2^0,A_1\phi_k+A_2\Phi_1^0\right> &   \left<\Psi_2^0,A_1\Phi_2^0\right> & 	\left<\Psi_2^0,A_1\Phi_3^0\right>\\
				\left<\Psi_3^0,A_1\phi_k+A_2\Phi_1^0+kc_k\phi_k\right> &   \left<\Psi_3^0,A_1\Phi_2^0\right>-k^2c_M\left<\phi_{\th},\phi_{\th}\right> &  \left<\Psi_3^0,A_1\Phi_3^0\right>-k^2c_P\left<\phi_{\th},\phi_{\th}\right> 
				\end{array}\right) 
\ee
In particular, noting that
\[
\det\left(\mathcal{D}_\xi - \lambda(\xi)I_\xi\right) = (ik\xi)^3\det\left(\widehat{\mathcal{D}}_\xi - \mu(\xi)\widehat{I}_\xi\right)
\]
it follows that the $\mu_j(\xi)$ in \eqref{e:ev_factor} are precisely the eigenvalues of the $\widehat{\mathcal{D}}_\xi$.  
This establishes the following result.

\begin{theorem}
\label{t:pert}
Under the hypotheses of Theorem \ref{t:co-per}, 
the spectrum of the operator $\mathcal{A}[\phi]$ on $L^2(\RM)$ in a sufficiently small neighborhood of the origin consists of precisely three $C^1$
curves $\{\lambda_j(\xi)\}_{j=1}^3$ defined for $|\xi|\ll 1$ which can be expanded as
\[
\lambda_j(\xi) = ik\xi\mu_j(0)+o(\xi),~~~j=1,2,3
\]
where the $\mu_j(0)$ are the eigenvalues of the matrix $\widehat{\mathcal{D}}_0$ above.  As such, a necessary condition for the wave $\phi$ to be modulationally
stable in the sense of Definition \ref{d:MI} is that the eigenvalues $\mu_j(0)$ are all real.  Furthermore, the wave $\phi$ is modulationally
stable provided that the eigenvalues of $\widehat{\mathcal{D}}_0$ are real and distinct.
\end{theorem}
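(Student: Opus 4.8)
The plan is to assemble the spectral-perturbation computation carried out above into a clean statement and then close the stability dichotomy using the reflection symmetry recorded in Remark~\ref{r:spec_sym2}. First I would record what analytic perturbation theory already delivers: by Theorem~\ref{t:co-per}, $\lambda=0$ is a $1$-periodic eigenvalue of $\mathcal{A}_0[\phi]$ of algebraic multiplicity three, and $\xi\mapsto\mathcal{A}_\xi[\phi]$ is an analytic family of relatively compact perturbations of $\mathcal{A}_0[\phi]$. Hence, by \cite[Chapter~VII]{K76}, there are a fixed ball $\mathcal{B}\ni 0$ in the spectral plane and a $\xi_0>0$ such that for $|\xi|<\xi_0$ the part of $\sigma_{L^2_{\rm per}(0,1)}(\mathcal{A}_\xi[\phi])$ lying in $\mathcal{B}$ consists of exactly three eigenvalues, counted with algebraic multiplicity, which are precisely the eigenvalues of the matrix $\mathcal{D}_\xi$ in \eqref{e:matrix_def} written in the analytically continued dual bases of Theorem~\ref{t:co-per} (with the normalization fix \eqref{e:mod_basis1}, so that $I_\xi=I+\mathcal{O}(\xi^2)$). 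By the Floquet--Bloch decomposition this is exactly $\sigma_{L^2(\RM)}(\mathcal{A}[\phi])\cap\mathcal{B}$, so no spectrum near the origin is overlooked.

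Next I would feed in the expansions \eqref{e:D0}--\eqref{e:D2} together with the gauge $S(\xi)$ from \eqref{e:Smat}: these exhibit $\widehat{\mathcal{D}}_\xi$ as analytic in $ik\xi$ with $\widehat{\mathcal{D}}_0$ the explicit matrix \eqref{e:Dmat} and $\widehat{I}_\xi$ analytic with $\widehat{I}_0=I$, while the determinant identity $\det(\mathcal{D}_\xi-\lambda I_\xi)=(ik\xi)^3\det(\widehat{\mathcal{D}}_\xi-\mu\widehat{I}_\xi)$ factors the three small eigenvalues as $\lambda_j(\xi)=ik\xi\,\mu_j(\xi)$, where $\mu_j(\xi)$ are the eigenvalues of the pencil $(\widehat{\mathcal{D}}_\xi,\widehat{I}_\xi)$. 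Continuity of the eigenvalues of an analytic matrix pencil then gives $\mu_j(\xi)\to\mu_j(0)$, the eigenvalues of $\widehat{\mathcal{D}}_0$, as $\xi\to 0$, whence $\lambda_j(\xi)=ik\xi\,\mu_j(0)+o(\xi)$; and since the $\mu_j$ are, at worst, H\"older continuous at the origin (Puiseux branching), the prefactor $\xi$ promotes each $\lambda_j$ to a genuine $C^1$ curve through $\lambda=0$. This establishes the structural part of the theorem.

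For the stability criteria I would argue as follows. Necessity: if some $\mu_j(0)$ has nonzero imaginary part, then taking real parts in $\lambda_j(\xi)=ik\xi\,\mu_j(0)+o(\xi)$ gives ${\rm Re}\,\lambda_j(\xi)=-k\xi\,{\rm Im}\,\mu_j(0)+o(\xi)$, which is strictly positive for all sufficiently small $\xi$ of the appropriate sign; thus $\mathcal{A}[\phi]$ has $L^2(\RM)$-spectrum in the open right half-plane arbitrarily close to the origin, and $\phi$ is modulationally unstable in the sense of Definition~\ref{d:MI}. Sufficiency: suppose the $\mu_j(0)$ are real and distinct, so that for $|\xi|\ll 1$ the $\mu_j(\xi)$ stay distinct and depend continuously (in fact analytically) on $\xi$. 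The decisive input is Remark~\ref{r:spec_sym2}: for each fixed real $\xi\neq 0$ the three-point set $\{\lambda_j(\xi)\}$ is invariant under $\lambda\mapsto-\overline{\lambda}$, which in the rescaled variable $\mu=\lambda/(ik\xi)$ is exactly complex conjugation $\mu\mapsto\overline{\mu}$; hence $\{\mu_j(\xi)\}$ is invariant under conjugation. Being distinct and convergent to distinct real limits, for small real $\xi$ each $\mu_j(\xi)$ must coincide with its own conjugate, i.e. be real, so $\lambda_j(\xi)=ik\xi\,\mu_j(\xi)\in i\RM$. Therefore $\sigma_{L^2(\RM)}(\mathcal{A}[\phi])\cap\mathcal{B}\subset i\RM$, and $\phi$ is modulationally stable.

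The soft arguments above are not the real obstacle; the delicate points are the bookkeeping behind the reduction. One must check that the $\mathcal{O}(\xi^2)$ basis modification \eqref{e:mod_basis1} does not alter any entry of $\mathcal{D}_\xi$ that is used through order $\xi^2$, and --- more subtly --- that the spectral reflection symmetry of Remark~\ref{r:spec_sym2} genuinely transfers to the reduced pencil $(\widehat{\mathcal{D}}_\xi,\widehat{I}_\xi)$ despite the $\xi$-dependent, non-orthonormal change of basis, so that ``$\{\mu_j(\xi)\}$ conjugation-invariant'' is a true statement about the $3\times 3$ matrices rather than merely a formal restatement of the Floquet picture.
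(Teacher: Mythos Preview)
Your argument is correct and follows essentially the same route as the paper: reduce to the $3\times 3$ matrix via analytic perturbation theory, factor $\lambda_j(\xi)=ik\xi\,\mu_j(\xi)$ through the gauge $S(\xi)$, and then close the stability dichotomy using the reflection symmetry of Remark~\ref{r:spec_sym2}. Your final caveat about the symmetry ``transferring to the reduced pencil'' is unnecessary, since (as you in fact argue) the conjugation-invariance of $\{\mu_j(\xi)\}$ follows directly from the invariance of the eigenvalue set $\{\lambda_j(\xi)\}\subset\sigma(\mathcal{A}_\xi[\phi])$ under $\lambda\mapsto-\overline{\lambda}$, with no need to realize the symmetry on the matrices themselves.
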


\begin{proof}
The necessity criteria above is clear since if any of the $\mu_j(0)$ have non-zero imaginary parts then the associated eigenvalue $\lambda_j(\xi)$ must have non-zero
real part for $0<|\xi|\ll 1$, implying modulational instability.  Now, suppose that the eigenvalues $\mu_j(0)$ of $\widehat{\mathcal{D}}_0$ are real and distinct.
Recalling from Remark \ref{r:spec_sym2} that the spectrum of the Bloch operators $\mathcal{A}_\xi[\phi]$ are symmetric about the imaginary axis,
it follows in this case that one must have  
\[
\lambda_j(\xi)\in\RM i
\]
for all $|\xi|\ll 1$ and each $j=1,2,3$, and hence the wave $\phi$ is modulationally stable in this case.
\end{proof}

The above  result provides a mathematically rigorous description of the modulational instability problem for the Camassa-Holm equation.
As described at the end of Section \ref{s:W}, however, Whitham's theory of modulations posits that the modulational instability of a given periodic
traveling wave $\phi$ is determined by the eigenvalues of the matrix $\mathcal{W}(\phi)$ defined in \eqref{e:D_Whitham}.  In the next section,
we reconcile these two results by proving they are in fact equivalent.


\section{Proof of Theorem \ref{T:main}}\label{s:proof}

In this section, we establish Theorem~\ref{T:main} by performing a row-by-row computation to show that
\be
\label{e:equalmats}
\mathcal{W}(\phi) = \widehat{\mathcal{D}}_0-cI,
\ee
where $\mathcal{W}(\phi)$ is the matrix associated to the Whitham modulation equations \eqref{e:Wsys}, and the eigenvalues of $\widehat{\mathcal{D}}_0$, defined in \eqref{e:Dmat}, rigorously describe the structure of the $L^2(\R)$-spectrum of the linearized operator $\mathcal{A}[\phi]$ in a sufficiently small neighborhood of the origin: see Theorem~\ref{t:pert}.
In particular, this identity  implies that the eigenvalues of the the Whitham matrix $\mathcal{W}(\phi)$ and the matrix $\widehat{\mathcal{D}}_0$ have exactly the same imaginary
parts.\footnote{That is, the presence of the $cI$ term in \eqref{e:equalmats} does not effect the imaginary parts, hence stability predictions by Theorem \ref{t:pert}, for the eigenvalues.}

First, observe from \eqref{e:D_Whitham} and \eqref{e:Dmat} that the first rows of the matrices $\mathcal{W}(\phi)$ and $\widehat{\mathcal{D}}_0-cI$ are clearly identical.
To show that the  second rows are likewise identical, it is sufficient to establish the identities 
\be
\label{e:row2} 
\begin{cases}
\left< 1, A_1\phi_k + A_2\phi' \right> = \left< 1,-\frac{3}{2}\phi^2-\frac{1}{2}k^2\phi^2_{\theta}\right>_k, \\
\left< 1, A_1\phi_M \right> - c = \left< 1,-\frac{3}{2}\phi^2-\frac{1}{2}k^2\phi^2_{\theta}\right>_M, \\
\left< 1, A_1\phi_P \right> = \left< 1,-\frac{3}{2}\phi^2-\frac{1}{2}k^2\phi^2_{\theta}\right>_P.
\end{cases}
\ee
We start by studying the second equation above.  First observe that
\[
\partial_\theta\left(1-k^2\partial_\theta^2\right)^{-1} = \left(1-k^2\partial_\theta^2\right)^{-1} \partial_\theta~~{\rm and }~~(1-k^2\partial_\theta^2)^{-1}1=1
\]
so that, recalling the definitions in \eqref{e:J_expand},  $J_01=J_0^\dag1=0$ and $J_11=J_1^\dag1=1$.  Recalling
the definitions in \eqref{e:Aidentities} and \eqref{e:J_expand}, we find that
%
\begin{align*}
\left< 1, A_1\phi_M\right> 
%
%
&= \left< J_1^\dag 1,L_0\phi_M\right> \\
&= \left< 1, (-k^2\d_{\theta}(c-\phi)\d_{\theta} + (c-3\phi+k^2\phi''))\phi_M\right> \\
&= \left< 1, -\half(k^2\phi_{\theta}^2)_M - \thalf(\phi^2)_M\right> + c\left<1,\phi_M\right>.
\end{align*} 
Hence, since $\langle 1,\phi_M\rangle = M_M=1$ it follows that
\be
\left<1,A_1\phi_M \right> - c = \left<1, -\thalf\phi^2 - \half k^2\phi_{\theta}^2 \right>_M
\ee
which establishes \eqref{e:row2} (ii). Next, noting that $\left<1,\phi_P\right>=P_M=0$, a completely analogous argument as above can be used to establish \eqref{e:row2} (iii). 

For \eqref{e:row2} (i), we begin by observing that
\begin{align*}
\left< 1,A_1\phi_k + A_2\phi'\right> 
&= \left< J_1^\dag 1,L_0\phi_k\right> + \left< 1,A_2\phi'\right>
\end{align*}
since, as above, $\left< 1,(J_0L_1)\phi_k\right>=0$.  Similar arguments to those used above show that
\[
\left<J_1^\dag 1,L_0\phi_k\right>=\left<1, -\thalf(\phi^2)_k - \half k^2(\phi')^2_k\right>.
\]
Continuing, recalling the definition of the operator $A_2$ from \eqref{e:Aidentities} we find that
\begin{align*}
\left< 1, A_2\phi'\right> 
&= \left< J_0^{\dagger}1,L_2\phi'\right> + \left< 1,J_1L_1\phi'\right> + \left< 1,J_2L_0\phi'\right> \\
&= \left< 1,J_1L_1\phi'\right>,
\end{align*}
where we have used that $J_0^\dag 1=0$ and $L_0\phi'=0$.  Further, using  that $J_1^\dag 1=1$ we find
\begin{align*}
\left< 1,J_1L_1\phi'\right> 
&= \left< 1,L_1\phi'\right> \\
&= \left< 1, k(\phi')^2 - 2ck\phi'' + 2k\phi\phi'' \right> \\
&=-k\left<\phi',\phi'\right>.
\end{align*}
Thus,
\begin{align*}
\left<1, A_1\phi_k + A_2\phi'\right> 
&= \left<1, -\thalf(\phi^2)_k - \half k^2(\phi')^2_k\right> -k \left<\phi',\phi'\right>
\end{align*}
which, by the product rule, is equivalent to
\be
\left<1, A_1\phi_k + A_2\phi'\right> = \left<1, -\thalf\phi^2 - \half k^2\phi_{\theta}^2\right>_k,
\ee
which establishes \eqref{e:row2} (i), and hence the second rows of the matrices in \eqref{e:equalmats} are identical.

Finally, to compare the third row of the matrices in \eqref{e:equalmats} it is sufficient to establish the identities
\be
\label{e:row3}
\begin{cases}
\left< \left(1-k^2\partial_\theta^2\right)\phi, A_1\phi_k+A_2\phi'+kc_k\phi_k \right> = \left<1, ck^2\phi_{\theta}^2-\phi^3+k^2\phi^2\phi_{\theta\theta}\right>_k, \\
\left< \left(1-k^2\partial_\theta^2\right)\phi, A_1\phi_M\right> - k^2c_M\left< \phi',\phi'\right> = \left<1, ck^2\phi_{\theta}^2-\phi^3+k^2\phi^2\phi_{\theta\theta}\right>_M, \\
\left<\left(1-k^2\partial_\theta^2\right)\phi,A_1\phi_P\right> - k^2c_P\left< \phi',\phi'\right> - c = \left< 1, ck^2\phi_{\theta}^2 - \phi^3 + k^2\phi^2\phi_{\theta\theta}\right>_P. 
\end{cases}
\ee 
These identities follow by similar means as above, albeit the computations are more involved. We begin by studying \eqref{e:row3} (ii).  To this end,
using \eqref{e:Aidentities} we first note that
\begin{equation}\label{e:A1_1}
\left< \left(1-k^2\partial_\theta^2\right)\phi, A_1\phi_M\right> 
= \left< \left(1-k^2\partial_\theta^2\right)\phi, J_0L_1\phi_M\right> + \left< \left(1-k^2\partial_\theta^2\right)\phi, J_1L_0\phi_M\right>. 	
\end{equation}
A direct calculation using the definition of $J_1$ in \eqref{e:J_expand} shows  that
%
\begin{align*}
\left< \left(1-k^2\partial_\theta^2\right)\phi, J_1L_0\phi_M\right> 
&=-2k\left<\phi', \mathcal{A}[\phi]\phi_M\right>+ \left<\phi, L_0\phi_M\right>.
\end{align*}
Moreover, integrating by parts gives
\begin{align*}
\left<\phi, L_0\phi_M\right> &= \left<\phi, k^2\phi'\phi_M' - ck^2\phi_M'' + k^2\phi\phi_M'' + c\phi_M - 3\phi\phi_M + k^2\phi''\phi_M\right> \\
&= \left<\phi, k^2\phi'\phi_M' + k^2\phi\phi_M'' - 3\phi\phi_M + k^2\phi''\phi_M\right> + c\left<\phi-k^2\phi'',\phi_M\right>.
\end{align*}
Using that\footnote{See also Theorem \ref{t:co-per}.}
\[
\left<\phi-k^2\phi'', \phi_M\right>  =P_M=0,
\]
integration by parts, as well as \eqref{e:AphiM} it follows that
\begin{align*}
\left< \left(1-k^2\partial_\theta^2\right)\phi, J_1L_0\phi_M\right> &=2k^2c_M\left< \phi',\phi'\right>
+\left<1,\frac{k^2}{2}\phi^2\phi_M''-3\phi^2\phi_M+k^2\phi\phi''\phi_M\right>\\
&=2k^2c_M\left< \phi',\phi'\right>+\left<1,\frac{k^2}{2}\left(\phi^2\phi''\right)_M-\left(\phi^3\right)_M\right>.
\end{align*}
Similarly, using integration by parts we find
\begin{align*}
\left<\left(1-k^2\partial_\theta^2\right)\phi, J_0L_1\phi_M\right> 
&= k\left<\phi,\partial_\theta L_1\phi_M\right> \\
&= \left<\phi, k^2\phi''\phi_M + 3k^2\phi'\phi_M' - 2ck^2\phi_M'' + 2k^2\phi\phi_M'' \right>\\
&= k^2c\left<\phi'\phi'\right>_M+\left<1, k^2\phi\phi''\phi_M +\frac{k^2}{2}\phi^2\phi_M''  \right> \\
&=k^2c\left<\phi'\phi'\right>_M+\left<1,\frac{k^2}{2}\left(\phi^2\phi''\right)_M\right>.
\end{align*}
From \eqref{e:A1_1}, it follows that
\begin{align*}
\left<\left(1-k^2\partial_\theta^2\right)\phi,A_1\phi_M\right> &= 
	k^2c_M\left<\phi',\phi'\right>+\left<1,k^2c\left(\phi'\right)^2-\phi^3+k^2\left(\phi^2\phi''\right)\right>_M,
\end{align*}
which establishes \eqref{e:row3} (ii).  Furthermore, using the fact that\footnote{This can also be seen from Theorem \ref{t:co-per}.}
\[
\left<\phi-k^2\phi'',\phi_P\right>=P_P=1,
\]
along with \eqref{e:AphiP}, a completely analogous argument as above establishes \eqref{e:row3} (iii). 

Finally, it remains to establish \eqref{e:row3} (i).  To this end, we begin by noting that
\begin{align*}
\left<\left(1-k^2\partial_\theta^2\right)\phi, A_1\phi_k + A_2\phi' + kc_k\phi_k\right>
&= \left<\left(1-k^2\partial_\theta^2\right)\phi,(J_1L_0+J_0L_1)\phi_k\right>\\
&\quad+ \left<\left(1-k^2\partial_\theta^2\right)\phi, A_2\phi'\right>  + kc_k\left<\left(1-k^2\partial_\theta^2\right)\phi, \phi_k\right>.
\end{align*}
Observing that
\[
\left<\phi-k^2\phi'',\phi_k\right> = P_k-k\left<\phi',\phi'\right>
\]
and that $P_k=0$, along with \eqref{e:Aphik} and the fact that $L_0\phi'=0$, calculations completely analogous to the above yield 
\begin{align*}
 \left<\left(1-k^2\partial_\theta^2\right)\phi,J_1L_0\phi_k\right>
&=(2k^2c_k-kc)\left<\phi',\phi'\right> + 2k\left<\phi',J_0L_1\phi'\right> \\
&\qquad\qquad+\left<1,\frac{k^2}{2}\left(\phi^2\phi''\right)_k-\left(\phi^3\right)_k\right>
\end{align*}
and
\begin{align*}
\left<\left(1-k^2\partial_\theta^2\right)\phi,J_0L_1\phi_k\right> 
&=\left<1,\frac{k^2}{2}\left(\phi^2\phi''\right)_k\right>+ck^2\left<\phi',\phi'\right>_k.
\end{align*}
Next, using that $L_0\phi'=0$ and \eqref{e:Aidentities} we have that
\begin{align*}
\left<\left(1-k^2\partial_\theta^2\right)\phi, A_2\phi'\right> 
&= \left<\left(1-k^2\partial_\theta^2\right)\phi, J_0L_2\phi'\right> + \left<\left(1-k^2\partial_\theta^2\right)\phi, J_1L_1\phi'\right>,
\end{align*}
where, by calculations similar to those above, we compute that
\begin{align*}
\left< \left(1-k^2\partial_\theta^2\right)\phi, J_0L_2\phi'\right> &= -k\left<\phi',L_2\phi'\right> \\
&= ck\left<\phi',\phi'\right> - k\left<\phi', \phi\phi'\right>, 
\end{align*}
and
\begin{align*}
\left<\left(1-k^2\partial_\theta^2\right)\phi, (J_1L_1)\phi'\right> 
&= -2k\left<\phi', (J_0L_1)\phi'\right> + 2ck\left<\phi', \phi'\right> +k\left<\phi', \phi\phi'\right>+2k\left<1,\phi^2\phi''\right>.
\end{align*}
%
Thus,
\begin{align*}
\left<(1-k^2\partial_\theta^2)\phi,A_2\phi'\right>&=3ck\left<\phi',\phi'\right>+2k\left<1,\phi^2\phi''\right>-2k\left<\phi',J_0L_1\phi'\right>.
\end{align*}
Noting also that 
\begin{align*}
\left<(1-k^2\partial_\theta^2)\phi, kc_k\phi_k\right> 
&= -k^2c_k\left<\phi', \phi'\right>
\end{align*}
putting everything together yields
\begin{align*}
\left<(1-k^2\partial_\theta^2)\phi, A_1\phi_k + A_2\phi' + kc_k\phi_k\right> &=\left(c_kk^2+2ck\right)\left<\phi',\phi'\right>+ck^2\left<\phi',\phi'\right>_k\\
&\qquad+\left<1,k^2(\phi^2\phi'')_k+2k\phi^2\phi''-(\phi^3)_k\right>\\
&=\left<1,ck^2(\phi')^2-\phi^3-k^2\phi^2\phi''\right>_k,
\end{align*}
which establishes \eqref{e:row3} (i), thus completing the proof of Theorem \ref{T:main}.

\appendix
\section{Stokes Waves for the Camassa-Holm Equation}\label{a:stokes}

In this section, we investigate the Stokes wave solutions of the CH equation \eqref{e:ch}, which here correspond to equilibrium solutions 
of the rescaled evolutionary equation \eqref{e:stationary_rescale} with asymptotically small oscillations about their mass\footnote{Recall by Remark \ref{r:mass} 
that the mass $M$ of a periodic traveling wave solution of the CH equation is necessarily positive.} $M>0$.  While waves have already
been shown to exist in Section \ref{s:existence}, our goal here is to provide an analytic parameterization of these waves.

To this end, we note that such Stokes waves are necessarily 
$1$-periodic solutions of the rescaled profile equation
\begin{equation}\label{e:profile_rescale3}
-\omega\left(\phi'-k^2\phi'''\right)+3k\phi\phi'=k^3\left(2\phi'\phi''+\phi\phi'''\right),
\end{equation}
where here primes denote differentiation with respect to the traveling variable $\theta=kx-\omega t$ and $\omega=kc$.  We now let
$F:H^3_{\rm per}(0,1)\times\RM^+\times\RM\mapsto L^2_{\rm per}(0,1)$ be defined via
\[
F(\phi;k,\omega):=-\omega\left(\phi'-k^2\phi'''\right)+3k\phi\phi'-k^3\left(2\phi'\phi''+\phi\phi'''\right),
\]
and note that for each $k>0$ and $\omega\in\RM$ solutions of \eqref{e:profile_rescale3} correspond to solutions $\phi\in H^3_{\rm per}(0,1)$ of
\[
F(\phi;k,\omega)=0.
\]
Observe that since  \eqref{e:profile_rescale3} is invariant under the transformations
\[
\theta\mapsto \theta+\theta_0~~{\rm and}~~\theta\mapsto-\theta
\]
we can without loss of generality seek solutions $\phi\in H^3_{\rm per}(0,1)$ of \eqref{e:profile_rescale3} that are even in $\theta$.  Further,
we note that $F$ is an analytic function of its arguments.

Now, for each $M>0$ we clearly have $F(M;k,\omega)=0$ for all $k>0$ and $\omega\in\RM$.  Using the Implicit Function Theorem, we know that non-constant
solutions of \eqref{e:profile_rescale3} may bifurcate from $\phi=M$ for a fixed $k>0$ provided that $\omega\in\RM$ is chosen so that the linearization
\[
D_\phi F(M;k,\omega)=\left(3kM-\omega\right)\partial_\theta+\left(\omega k^2-k^3M\right)\partial_\theta^3
\]
is not an isomorphism from $H^3_{\rm per}(0,1)$ into $L^2_{\rm per}(0,1)$.  Observing that
\[
D_\phi F(M;k,\omega)e^{2\pi i n\theta} = 2\pi  in\left[\omega\left(1+4k^2n^2\pi^2\right)-\left(3kM-4k^3Mn^2\pi^2\right)\right]e^{2\pi i n\theta}
\]
for all $n\in\ZM$, it follows that 
\[
1,e^{\pm 2\pi i x}\in \ker\left(D_\phi F(M;k,\omega_0)\right)
\]
provided that 
$\omega=\omega_0(k,M)$, where
\[
\omega_0(k,M):=\frac{3kM+4k^3M\pi^2}{1+4k^2\pi^2}.
\]
Further, noting that the function
\[
(0,\infty)\ni k\mapsto\omega_0(k,M)
\]
is strictly monotone it follows that for all $M,k>0$, we have that
\[
\ker\left(D_\phi F(M;k,\omega_0)\right) =
{\rm span}\left\{1, e^{\pm 2\pi i x}\right\}.
\]
Since $D_\phi F(M;k,\omega_0)$ is clearly skew adjoint, it follows that the co-kernel of $D_\phi F(M;k,\omega_0)$ is also three-dimensional, and hence the linear operator
$D_\phi F(M;k,\omega_0)$ is necessarily Fredholm with index zero for all $M,k>0$.  

Using a straightforward Lyapunov-Schmidt argument, one may now construct for each $M,k>0$ a one-parameter family of non-constant, even, smooth solutions
of \eqref{e:profile_rescale3} near the bifurcation point $(\phi,\omega)=(M,\omega_0)$.  For details, see \cite{BT03,HP17,J13,K04}, for example.  
In particular, for each $M,k>0$ there exists a family of $1$-periodic solutions $\phi(\theta;k,M,A)$ of \eqref{e:profile_rescale3}  with
temporal frequency $\omega(k,M,A)$, defined for $|A|\ll 1$, where here $\phi$ and $\omega$ depend analytically on $A$ and $\phi$
is smooth and even in $\theta$ and, furthermore, $\omega$ is even in $A$.  Furthermore, for $|A|\ll 1$ the functions $\phi$ and $\omega$
admit asymptotic expansions of the form
\[
\left\{\begin{aligned}
\phi(\theta;k,M,A) &= M  + A\cos\left(2\pi\theta\right) + \sum_{j=2}^\infty A^j\psi_j(\theta;k,M),\\
\omega(k,M,A)&=\omega_0+\sum_{j=1}^\infty\omega_{2j}(k,M)A^{2j},
\end{aligned}\right.
\]
where  
\begin{equation}\label{e:FA_satisfy}
\int_0^1\psi_j(\theta)d\theta=\int_0^1\psi_j(\theta)\cos(2\pi\theta)d\theta = 0
\end{equation}
for all $j\geq 2$.  

Substituting these expansions into \eqref{e:profile_rescale3} yields an infinite hierarchy of equations indexed by the order of the small parameter $A$.  By construction,
the first non-trivial equation occurs at $\mathcal{O}(A^2)$, which reads
\[
D_\phi F(M;k,\omega_0)\psi_2 = 3\pi k\left(1+4k^2\pi^2\right)\sin\left(4\pi x\right).
\]
By the Fredholm Alternative, this has a $1$-periodic solution satisfying  \eqref{e:FA_satisfy} given by
\[
\psi_2(\theta;k,M)= \frac{\left(1+4k^2\pi^2\right)^2}{32k^2M\pi^2}\cos\left(4\pi\theta\right).
\]
Continuing, the $\mathcal{O}(A^3)$ equation reads
\[
D_\phi F(M;k,\omega_0)\psi_3 = 
\frac{1+4k^2\pi^2}{16kM\pi}\left[\mathcal{R}_1+\mathcal{R}_2\cos\left(4\pi x\right)\right]\sin(2\pi\theta)
\]
where
\[
\mathcal{R}_1=6+72k^2\pi^2+192k^4\pi^4-32kM\pi^2\omega_2,~~\mathcal{R}_2=3\left(3+40k^2\pi^2+112k^4\pi^4\right).
\]
By the Fredholm Alternative again, it follows that this has a $\pi$-periodic solution provided that
\[
\omega_2(k,M)=\frac{3\left(1+4k^2\pi^2\right)^2}{64kM\pi^2}.
\]
Together, the above arguments provide the analytic Stokes wave expansions for the CH equation given in \eqref{e:small_expand}.

\bibliographystyle{abbrv}
\bibliography{CH}

\end{document}